\newtheorem{constr}{Construction}
\newtheorem{thmintro}{Theorem}
 \newtheorem{thm}{Theorem}[section]
 \newtheorem{prop}[thm]{Proposition}
 \newtheorem{cor}[thm]{Corollary}
 \newtheorem{lem}[thm]{Lemma}
 \theoremstyle{definition}
 \newtheorem{defi}[thm]{Definition}
 \newtheorem{rem}[thm]{Remark}
 \newtheorem{ex}[thm]{Example}
\newcommand{\Z}{\mathbb{Z}}
\newcommand{\R}{\mathbb{R}}
\newcommand{\C}{\mathbb{C}}
\newcommand{\T}{\mathbb{T}}
\newcommand{\CP}{\mathbb{CP}}
\newcommand{\cA}{\mathcal{A}}
\newcommand{\cY}{\mathcal{Y}}
\newcommand{\del}{\partial}
\newcommand{\delbar}{\bar{\del}}
\DeclareMathOperator{\id}{id}
\DeclareMathOperator{\de}{d}
\DeclareMathOperator{\esp}{e}
\newcommand{\Oh}{\mathcal{O}}
\begin{document} 

\title[]{Complex non-Kähler manifolds that are cohomologically close to, or far from, being K\"ahler}
\author{Hisashi Kasuya, Jonas Stelzig}

\subjclass[2010]{}

\keywords{}

\begin{abstract}
We give four constructions of non-$\del\delbar$ (hence non-K\"ahler) manifolds: (1) A simply connected page-$1$-$\partial\bar\partial$-manifold (2) A simply connected $dd^c+3$-manifold (3) For any $r\geq 2$, a simply connected compact manifold with nonzero differential on the $r$-th page of the Fr\"olicher spectral sequence. (4) For any $r\geq 2$, a pluriclosed nilmanifold with nonzero differential on the $r$-th page of the Fr\"olicher spectral sequence. The latter disproves a conjecture by Popovici. A main ingredient in the first three constructions is a simple resolution construction of certain quotient singularities with control on the cohomology.
\end{abstract}

\maketitle

\section{Introduction}
For any complex manifold $X$, we have the Fr\"olicher (or Hodge-de Rham) spectral sequence, starting from Dolbeault cohomology and converging to de Rham cohomology equipped with the Hodge filtration
\[
E_1^{p,q}=H^{p,q}_{\delbar}(X)\longrightarrow (H_{dR}^k(X;\C),F).
\]
When $X$ is compact and admits a K\"ahler metric or more generally admits a compact K\"ahler modification (Fujiki's class $\mathcal C$), this spectral sequence degenerates on the first page. Moreover, in this case, the Hodge filtration $F$ and its conjugate $\bar F$ induce a pure Hodge structure on de Rham cohomology, which is succinctly characterized as:
\[
b_k^{p,q}(M):=\dim\operatorname{gr}^p_F\operatorname{gr}^q_{\bar{F}}H_{dR}^k(X;\C)=0\text{ whenever }k\neq p+q,
\]
where $\operatorname{gr}^p_F$ denotes the quotient of two successive filtration steps. If, like on K\"ahler manifolds, both of these properties ($E_1$-degeneration and purity) hold, one says $X$ is a $\del\delbar$-manifold.

It is well-known that for general compact complex manifolds, both of these properties may fail, even drastically so, \cite{CFG87, Pit89, CFG91, AT11, BR14, Ste22}. The examples witnessing this failure tend to be non-simply connected or of high dimension. We construct here simply connected examples for which the $\partial\bar\partial$-property fails in minimal and maximal ways:

\begin{constr}\label{constr: p1ddbar}
There exists a family of $3$-dimensional simply connected compact complex manifolds $M_b$, $b\in \R_{>0}$ such that, for all $b$, the Hodge filtrations induce a pure Hodge structure on $H_{dR}^k(M_b)$ for all $k$ and one has $E_2(M_b)=E_{\infty}(M_b)$, but $E_1(M_b)\neq E_2(M_b)$ if and only if $b\in\pi\Z$.
\end{constr}

The combination of pure Hodge structure and $E_2=E_\infty$ has been studied under the name page-$1$-$\del\delbar$-property or page-$2$-Hodge decomposition in \cite{PSU21, PSU22, KS23}. Most notably, any compact complex holomorphically parallelizable manifold with solvable Lie algebra of holomorphic vector fields (i.e. holomorphically parallelizable solvmanifolds) satisfies this property. All these examples are not simply connected. Since any compact complex curve and any simply connected compact complex surface satisfies the usual $\del\delbar$-property, the dimension in this construction is optimal. For $b\not\in\pi\Z$, the examples $M_b$ are three dimensional compact complex  $\del\delbar$-manifolds not in Fujiki's class $\mathcal C$ which are very different from examples given in \cite{Fri}.

\begin{constr}\label{constr: ddc+3}
There exists a simply connected compact complex $4$-fold $M$ which satisfies $E_1(M)=E_{\infty}(M)$, with
\[
b_k^{p,q}(M)=0\text{ whenever }|p+q-k|> 1
\]
but  is not a $\del\delbar$-manifold.
\end{constr}
The combination of $E_1(M)=E_{\infty}(M)$ and $b_k^{p,q}(M)=0$ for $|p+q-k|>1$ has been studied in \cite{SW22}, under the name $dd^c+3$-condition. All complex surfaces and all Vaisman manifolds are $dd^c+3$, but for those cases the simply connected examples are already K\"ahler. We do not know if there are $3$-dimensional simply connected $dd^c+3$-manifolds but have not tried too hard to find one. Note that one may construct simply connected manifolds with the same cohomological properties as in Constructions \ref{constr: p1ddbar} and \ref{constr: ddc+3} in arbitrarily high dimensions by taking products with simply connected $\del\delbar$-manifolds, e.g. $\CP^n$.

While the first two constructions give simply connected examples with `minimal' failure of the degeneration and purity properties, one may also ask for `maximal' failure. By taking products of the manifold in Construction \ref{constr: ddc+3} with itself sufficiently many times, one finds, for any $r\in\Z$ manifolds $M_r$ with $E_1(M_r)=E_\infty(M_r)$ and some $b_k^{p,q}(M_r)\neq 0$ for $|p+q-k|>r$ for any $r$, using the K\"unneth formula for the $b_k^{p,q}$ \cite[Prop. 5.8]{Ste22}. Since the degeneration step on the Fr\"olicher spectral sequence does not increase under taking products, the same argument is not possible there. Nevertheless, we show:
\begin{constr}\label{constr: FSSpi1}
For any $r$, there exists a simply connected compact complex manifold with $E_r(M_r)\neq E_\infty(M)$.
\end{constr}
This was first shown by Bigalke and Rollenske \cite{BR14} without the $\pi_1(M)=0$ hypothesis and our construction builds on theirs. 

The main idea in all of these constructions is as follows (c.f. \cite{FM08, Gua95}): Start with a non-simply connected example which admits a non-free action of a finite cyclic group $G$ such that the quotient is simply connected. Then resolve the singularities. The main technical difficulty is to keep a control over the cohomologies during this process. Since both the Fr\"olicher spectral sequence and the numbers $b_k^{p,q}$ of a complex manifold $X$ are governed by the double complex of smooth forms $(A_X,\del,\delbar)$, this amounts to keeping track of the bigraded quasi-isomorphism type of this double complex. For the first step this is standard: Differential forms on the quotient $X/G$ can be explained via (locally) invariant forms $A_{X/G}:=A_{X}^G$. The second step is more intricate. One knows how to control the double complex along blow-ups, so the most naive approach would be to blow-up submanifolds with nontrivial stabilizers of the $G$-action and hope that the quotient of the blow-up by the induced action (or a finite iteration of such steps) yields a resolution of $X/G$. Such an approach has been carried out in specific cases, see e.g. \cite{ASTT} and \cite{ST22}, but will in general already fail for $|G|=5$, see e.g. \cite{Star, Oda88}. Nevertheless, it can be extended to show the following general result:

\begin{thmintro}\label{thmintro: resolution}
Let $X$ be a compact complex manifold of dimension $n$ with an action of a finite abelian group $G$ of order $2^r\cdot 3^s$, for some $r,s\geq 0$. Then, the quotient $X/G$ admits a resolution of singularities $\tilde X\to X/G$ and a bigraded quasi-isomorphism 
\begin{equation}\label{eqn: A=AG+R}
    A_{\tilde X}\simeq A_X^G\oplus R[1,1],
\end{equation}
where $R$ looks like a bicomplex of a compact complex manifold $n-2$. 
There is an isomorphism of fundamental groups $\pi_1(X)\cong\pi_1(\tilde{X})$.
\end{thmintro}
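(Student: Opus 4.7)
The plan is to induct on $|G|$, factoring through prime-order subgroups. Since $G$ is abelian of order $2^r 3^s > 1$, choose a subgroup $H \leq G$ of prime order $p \in \{2,3\}$; as $G$ is abelian, $H$ is normal and $\bar G := G/H$ acts on $X/H$. If I can establish the theorem for $G = H$ and then apply the inductive hypothesis to the resulting $\bar G$-manifold, I am done (since $|\bar G| = 2^{r'} 3^{s'}$).

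In the prime case $G = H$, the fixed locus $F := X^H$ is a disjoint union of smooth $G$-invariant complex submanifolds, by equivariant linearization, and the normal bundle $N_{F/X}$ carries a linear $H$-action with no trivial character. I would construct a $G$-equivariant sequence of blow-ups along smooth centers of codimension $\geq 2$,
\[ X = X_0 \leftarrow X_1 \leftarrow \cdots \leftarrow X_N, \]
so that $X_N/H$ is smooth. For $p = 2$, a single blow-up of the codimension-$\geq 2$ components of $F$ suffices: the lifted $H$-action on the exceptional divisor $\mathbb{P}(N_{F/X})$ is trivial while acting by $-1$ on its normal line bundle, giving a smooth quotient (locally the total space of the square of the tautological bundle). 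For $p = 3$, the normal bundle splits into eigenspaces $N_\omega \oplus N_{\omega^2}$, and a finite iteration of smooth blow-ups mirroring the toric resolution of cyclic $\mu_3$-singularities $\mathbb{C}^n/\mu_3$ produces a smooth $H$-quotient.

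The cohomological decomposition now follows by iteratively applying the blow-up formula for bicomplexes,
\[ A_{\mathrm{Bl}_Z Y} \simeq A_Y \oplus \bigoplus_{i=1}^{c-1} A_Z[i,i], \]
where $Z$ has codimension $c \geq 2$, and taking $H$-invariants. The invariants functor is exact via the averaging projector $\frac{1}{|H|}\sum_{h \in H} h^*$, and for the smooth quotient one has $A_{X_N}^H \simeq A_{X_N/H}$; this yields $A_{X_N/H} \simeq A_X^H \oplus R_1[1,1]$, with $R_1$ built from bicomplexes of compact complex manifolds of dimension $\leq n-2$. The inductive hypothesis applied to $(X_N/H, \bar G)$ then gives $A_{\tilde X} \simeq A_X^G \oplus R[1,1]$ as desired.

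For the fundamental group, each blow-up along a smooth center of real codimension $\geq 4$ preserves $\pi_1$, so $\pi_1(X_N) = \pi_1(X)$. Because $H$ acts as a pseudoreflection along the exceptional divisor (trivially on the divisor, nontrivially on its normal line bundle), a Van Kampen argument for the ramified cover $X_N \to X_N/H$ kills the local meridians and yields $\pi_1(X_N/H) = \pi_1(X_N)$; iterating completes the $\pi_1$ claim. The main obstacle will be the second step in the $p = 3$ case, namely ensuring that every intermediate blow-up center remains smooth and $G$-equivariant throughout the iteration, and that the process is compatible with the further $\bar G$-action needed for the inductive step.
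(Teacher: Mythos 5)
Your proposal follows essentially the same route as the paper: induct on $|G|$ through a prime-order subgroup $H$ of order $2$ or $3$, make the quotient smooth by a sequence of blow-ups in smooth centers, apply the blow-up formula for bicomplexes and take invariants, then run the induction on the $G/H$-action on the smooth quotient, with a van Kampen argument for $\pi_1$. The one step you leave open --- the order-$3$ case --- is exactly where the paper does concrete work: linearizing $\sigma$ as a diagonal matrix with $k_1$ eigenvalues $\zeta$ and $k_2$ eigenvalues $\zeta^2$, it checks that blowing up the fixed locus yields, in each standard chart of the blow-up, a diagonal action with only \emph{one} nontrivial eigenvalue type ($k_2+1$, resp.\ $k_1+1$, nontrivial entries and the rest equal to $1$), so that one further blow-up of the new fixed locus reduces everything to quasi-reflections; thus at most two rounds suffice, and no appeal to the toric resolution of $\C^n/\mu_3$ is needed. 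The equivariance concern you flag is resolved by observing that all centers are (iterated) fixed loci of $\sigma$, hence automatically stable under the abelian group $G$, so the whole tower carries a $G$-action and $\bar G=G/H$ acts on the smooth quotient as required for the inductive step.
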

Here, for two double complexes $A,B$, the notation $A\simeq B$ means there is a (chain of) bigraded quasi-isomorphism(s) connecting the two and $(R[1,1])^{p,q}:=R^{p-1,q-1}$. We say that a bicomplex $R$ looks like that of a compact complex manifold of dimension $k$ if $R^{p,q}=0$ unless $0\leq p,q\leq k$, and for any decomposition into indecomposable summands (see Section \ref{sec: cohm} below),
\begin{enumerate}
    \item (real structure and duality) the collection of zigzags (with multiplicity) is symmetric under mirroring along diagonal $p=q$ and antidiagonal $p+q=k$,
    \item (only dots in the corners) the only zigzags with components in degrees $(0,0)$, $(k,0)$, $(0,k)$ and $(k,k)$ are dots,
    \item (FSS degenerates in dimension $2$) if $k=2$, there are no even length zigzags.
\end{enumerate}

The structure of $R$ in Theorem \ref{thmintro: resolution} can, in principle, be explicitly described in every concrete example. 

In a somewhat different direction, one may ask whether the existence of special metrics (i.e. other than K\"ahler) implies something about the Fr\"olicher spectral sequence or the numbers $b_k^{p,q}$. For example, there are some results in this direction for Vaisman metrics (mentioned above), Astheno-K\"ahler metrics \cite{JY93, CR22} and Hermitian symplectic metrics \cite{Cav20} (the latter may or may not be all K\"ahler \cite{ST10}). It was shown in \cite{Pop19} that any compact complex manifold with an SKT (or pluriclosed) metric (i.e. $\del\delbar\omega=0$) which in addition satisfies a certian bound on the torsion satisfies $E_2=E_\infty$ and he further conjectured that this may be true for arbitrary SKT metrics. However, we show in Appendix \ref{app: SKTvsFSS}:
\begin{constr}\label{constr: FSSvsSKT}
For any $r\geq 2$, there exist compact complex nilmanifolds $M_r$ (possibly of high dimension) with an SKT metric and $E_r(M)\neq E_\infty(M)$.
\end{constr}
This construction also adapts the examples of \cite{BR14}, this time by passing to appropriately chosen torus bundles over them, which admit SKT metrics but keep the nondegeneracy properties of the Fr\"olicher spectral sequence. While the present paper was in preparation, \cite{LUV22} gave a different type of counterexample to the conjecture by pointing out that the example of \cite{Pit89}, which has $E_2\neq E_3=E_\infty$, carries an SKT metric by a result in \cite{AI03}.\\

\textbf{Acknowledgements.}
Parts of this paper were written during a stay of J.S. at the University of Osaka, and he is grateful for the invitation, financial support (provided by JSPS KAKENHI Grant Number JP19H01787) and hospitality. J.S. would also like to thank MFO (Mathematisches Forschungsinstitut Oberwolfach) and the organizers and participants of a very inspiring workshop in 2020, where the examples from Construction \ref{constr: FSSvsSKT} were found. Furthermore, J.S. thanks Daniel Greb and Jean Ruppenthal for some helpful discussions. We warmly thank Sönke Rollenske for explaining to us a specific resolution construction that evolved into what is now Theorem \ref{thmintro: resolution}. Finally, we thank J. Koll\'ar and D. Abramovich for remarks on the preprint version. In particular, D. Abramovich pointed out a gap in the proof of a previous, more general version of Theorem \ref{thmintro: resolution}.
\section{Preliminaries}

\subsection{Cohomology of compact complex manifolds} \label{sec: cohm}
A double complex (of $\C$-vector spaces) is a bigraded $\C$-vector space $A=\bigoplus_{p,q\in\Z} A^{p,q}$ together with two endomorphisms $\del$ and $\delbar$ of degree $(1,0)$ and $(0,1)$ which satisfy $d^2=0$ for $d:=\del+\delbar$ or equivalently $\del^2=\delbar^2=\del\delbar+\delbar\del=0$. We will always assume double complexes to be bounded, i.e. $A^{p,q}=0$ outside a finite region of $\Z^2$. Often, they will also come equipped with a real structure, i.e. an antilinear involution $\sigma:A\to A$ such that $\sigma A^{p,q}=A^{q,p}$ and $\sigma\del\sigma=\delbar$. The principal example is $A_X:=\cA_X(X)$, the space of smooth $\C$-valued differential forms on a complex manifold. It carries a real structure induced by conjugation on the coefficients of forms. 
By \cite{KQ20, Ste21}, any double complex can written as a direct sum of indecomposable subcomplexes and every indecomposable double complex is either a `square' or a `zigzag' (see diagrams below, where each arrow drawn is an isomorphism and all spaces and maps not drawn are trivial).

For any double complex $A$ one has the column and row filtrations
\[
F^pA:=\bigoplus_{\substack{r,s\in\Z^2\\r\geq p}}A^{r,s}\quad \bar{F}^qA:=\bigoplus_{\substack{r,s\in\Z^2\\s\geq q}}A^{r,s}
\] and these induce filtrations on total cohomology $H_d^k(A)$, denoted by the same letters. The filtrations are said to induce a \textit{pure Hodge structure} if $H^k=\bigoplus_{p+q=k}F^pH^k_{d}(A)\cap\bar F^qH^k_d(A)$. The filtrations also induce spectral sequences that converge from column, resp. row cohomology to the total cohomology, e.g.
\[
E_1^{p,q}=H^{p,q}_{\delbar}(A)\Longrightarrow (H_{d}^{p+q}(A),F).
\]
If $A=A_X$, this (column) spectral sequence is known as the \textit{Fr\"olicher spectral sequence} and the second (row) spectral sequence is determined via the real structure, so it is usually not considered. The following is a well-known tool to construct differentials on high pages of the spectral sequence, see e.g. \cite{BT82, KQ20, Ste21}.

\begin{lem}
Assume there is a direct sum decomposition of double complexes $A=B\oplus C$, with $B$ a zigzag of length $2r$, i.e. $B$ is isomorphic to the following complex of dimension $2r$
\[
\begin{tikzcd}
    \langle a_1\rangle\ar[r,"\del"]&\langle\del a_1=\delbar a_2\rangle&&&&\\
    &\langle a_2\rangle\ar[u,"\delbar"]\ar[r,"\del"]&\ddots&&&\\
    &&&\ddots&&\\
    &&&\langle a_r\rangle\ar[u,"\delbar"]\ar[r,"\del"]&\langle \del a_r\rangle.
\end{tikzcd}
\] Then, there is a nonzero differential on the $r$-th page of the Fr\"olicher spectral sequence of $A$: In fact, $d_r([a_1])=\pm [\del b_r]$.
\end{lem}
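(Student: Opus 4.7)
The plan is to reduce to the case $A=B$, using that the Fr\"olicher spectral sequence respects direct sums of double complexes, and then exploit that an even-length zigzag has vanishing total cohomology. Writing $v_i:=\del a_i=\delbar a_{i+1}$ for $i<r$ and $v_r:=\del a_r$, a direct check shows that every $v_i$ is $d$-exact (for instance, $v_i=d(a_i-a_{i-1}+\cdots+(-1)^{i-1}a_1)$) and that no nonzero linear combination of the $a_i$ is $d$-closed, because $d(\sum c_i a_i)=\sum_{i<r}(c_i+c_{i+1})v_i + c_r v_r$. Hence $H^*_d(B)=0$, and consequently $E_\infty(B)=0$.

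Next I would compute $E_1(B)=H^{*,*}_{\delbar}(B)$ directly in the given basis: $a_1$ is $\delbar$-closed with no $\delbar$-preimage; $\del a_r$ is $\delbar$-closed (since $\delbar\del a_r=-\del v_{r-1}=0$) and also has no $\delbar$-preimage; the $a_i$ with $i\geq 2$ fail to be $\delbar$-closed because $\delbar a_i=v_{i-1}\neq 0$; and the $v_i$ with $i<r$ are $\delbar$-exact. Therefore $E_1(B)$ is two-dimensional, supported only in the bidegrees $(p_0,q_0)$ of $a_1$ and $(p_0+r,q_0-r+1)$ of $\del a_r$.

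Finally, $d_s$ shifts bidegree by $(s,1-s)$, so with only two bidegrees occupied in $E_1(B)$ the only value of $s$ for which a nontrivial differential between them is even possible is $s=r$; all earlier differentials vanish for degree reasons. Thus $E_1(B)=\cdots=E_r(B)$, and since $E_\infty(B)=0$, the map $d_r\colon E_r^{p_0,q_0}(B)\to E_r^{p_0+r,q_0-r+1}(B)$ is forced to be an isomorphism. In particular $d_r([a_1])=c[\del a_r]$ with $c\neq 0$. The precise sign is read off from the standard cascade description of $d_r$: setting $b_i:=a_{i+1}$, the hypotheses give $\del a_1=\delbar b_1$, $\del b_1=\delbar b_2,\ldots,\del b_{r-2}=\delbar b_{r-1}$, and the definition of $d_r$ yields $d_r([a_1])=\pm[\del b_{r-1}]=\pm[\del a_r]$.

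The argument is essentially formal and there is no real obstacle; the only mild subtlety is the bookkeeping needed to pin down the sign in the cascade formula. Everything else follows from the acyclicity of even-length zigzags combined with the bidegree constraints on the differentials $d_s$.
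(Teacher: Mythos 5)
Your proof is correct and follows essentially the same route as the paper: reduce to $B$ by additivity of the spectral sequence under direct sums, observe that $E_1(B)$ is two-dimensional while $H_d(B)=0$, and conclude that the forced nonzero differential must be $d_r$ for bidegree reasons. You simply spell out the computations (acyclicity of the even zigzag, the bidegree bookkeeping, and the cascade formula for the sign) that the paper leaves implicit.
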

\begin{proof}
We prove this up to constant: The column cohomology of $B$ has a basis given by the two nonzero classes $[a_1]$ and $[\del a_r]$. On the other hand, the total cohomology of $B$ vanishes. Thus, there has to be a nonzero differential on $B$ as claimed. The statement for $A$ follows from the fact that spectral sequences are additive under direct sums of filtered modules.
\end{proof}
Note that because one can always take a decomposition into indecomposables, the converse is also true \cite{KQ20, Ste21}.

\begin{defi}[\cite{DGMS75}]
A complex manifold $X$ (resp. a double complex $A$) is said to have the $\del\delbar$-property, if the following equivalent conditions hold:
\begin{enumerate}
    \item The only direct summands appearing in $A_X$ (resp. $A$) are squares and zigzags of length $1$ (`dots'):
    \[
    \begin{tikzcd}
        \C\ar[r,"\sim"]&\C\\
        \C\ar[u,"\sim"]\ar[r,"\sim"]&\C\ar[u,"\sim"]
    \end{tikzcd}\quad\text{or}\quad \C.
    \]
    \item The Fr\"olicher spectral sequence(s) for $X$ (resp. $A$) degenerates on the first page and the Hodge filtrations induce a pure Hodge structure on the total cohomology.
\end{enumerate}
\end{defi}
Sometimes, the name $dd^c$-property is used instead.

\begin{defi}[\cite{PSU22}]
A complex manifold $X$ (resp. a double complex $A$) is said to have the page-$1$-$\del\delbar$ (or page-$2$-Hodge decomposition) property, if the following equivalent conditions hold:
\begin{enumerate}
    \item The only direct summands appearing in $A_X$ (resp. $A$) are squares, and zigzags of length $1$ and $2$ (`dots and lines'):
    \[
   \begin{tikzcd}
        \C\ar[r,"\sim"]&\C\\
        \C\ar[u,"\sim"]\ar[r,"\sim"]&\C\ar[u,"\sim"]
    \end{tikzcd},\quad \C,\quad \begin{tikzcd} \C\\\C\ar[u,"\sim"]\end{tikzcd},\quad\begin{tikzcd}\C\ar[r,"\sim"]&\C\end{tikzcd}.
    \]
    \item The Fr\"olicher spectral sequences for $X$ (resp. $A$) degenerate on the second page and the Hodge filtrations induce a pure Hodge structure on the total cohomology.
\end{enumerate}
\end{defi}

\begin{defi}[\cite{SW22}]
A complex manifold $X$ (resp. a double complex $A$) is said to have the $dd^c+3$-property, if the following equivalent conditions hold:
\begin{enumerate}
    \item The only direct summands appearing in $A_X$ (resp. $A$) are squares and zigzags of length $1$ and $3$ (`dots, $L$'s and reverse $L$'s'):
    \[
    \begin{tikzcd}
        \C\ar[r,"\sim"]&\C\\
        \C\ar[u,"\sim"]\ar[r,"\sim"]&\C\ar[u,"\sim"]
    \end{tikzcd},\quad \C,\quad \begin{tikzcd} \C&\\\C\ar[u,"\sim"]\ar[r,"\sim"]&\C\end{tikzcd},\quad\begin{tikzcd}\C\ar[r,"\sim"]&\C\\&\C\ar[u,"\sim"]\end{tikzcd}.
    \]
    \item The Fr\"olicher spectral sequence(s) for $X$ (resp. $A$) degenerates on the first page and the Hodge filtrations on total cohomology satisfy:
    \[
    \sum_{p+q=r}F^pH^k_d\cap \bar F^qH^k_d=\begin{cases}0 &r> k+1\\
                                        H_d^k&r<k-1.
                        \end{cases}
    \]
\end{enumerate}
\end{defi}
In terms of the numbers $b_k^{p,q}$ of the introduction, the condition on the filtrations can be written as $b_k^{p,q}=0$ as soon as $|p+q-k|>1$.
We will use:

\begin{lem}\label{lem: invariants}
Suppose $A$ is a double complex satisfying the page-$r$-$\del\delbar$-property (resp. the $dd^c+3$-property), with a finite group $G$ acting on $A$ by automorphisms of double complexes, then the double complex of invariants $A^G$ also satisfies the page-$1$-$\del\delbar$-property (resp. the  $dd^c+3$-property).
\end{lem}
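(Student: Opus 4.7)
The plan is to exhibit $A^G$ as a direct summand of $A$ in the category of bounded double complexes, and then read off the conclusion from the structure theorem recalled just before the lemma.

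First I would form the averaging operator $\pi:=\frac{1}{|G|}\sum_{g\in G}g\colon A\to A$, which is well-defined since $|G|<\infty$ and we work over $\C$. Each $g\in G$ is a morphism of double complexes, hence bigraded and commuting with $\del$ and $\delbar$; these properties pass to the average. Thus $\pi$ is a bigraded idempotent commuting with $\del$ and $\delbar$, with image $A^G$ and kernel a $G$-stable subcomplex $K:=\ker\pi$. This gives a splitting
\[
A = A^G \oplus K
\]
in the category of bounded double complexes, exhibiting $A^G$ as a direct summand of $A$.

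Next I would invoke the structure theorem of \cite{KQ20, Ste21}: every bounded double complex decomposes, essentially uniquely, as a direct sum of indecomposable pieces, each of which is a square or a zigzag. In particular the Krull--Schmidt property holds in this category, so every indecomposable summand of $A^G$ is, up to isomorphism, an indecomposable summand of $A$. Combining the two observations: if $A$ has the page-$1$-$\del\delbar$-property, then its indecomposable summands are confined to squares, dots, and length-$2$ zigzags, and the same constraint therefore holds for $A^G$; the $dd^c+3$ case is identical, with length-$2$ zigzags replaced by the two allowed length-$3$ shapes.

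The only point that requires any care is ensuring that the uniqueness part of the structure theorem really controls the summands of the sub-complex $A^G$ and not merely of $A$ itself, but this is exactly what the Krull--Schmidt statement in \cite{Ste21} provides, so I do not anticipate a substantive obstacle.
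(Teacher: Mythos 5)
Your proof is correct and follows essentially the same route as the paper: the paper invokes Maschke's theorem to split $A$ into isotypic components (of which $A^G$ is the trivial one) and then observes that the defining condition on indecomposable summands passes to direct summands, while you split off $A^G$ directly via the averaging idempotent $\pi=\frac{1}{|G|}\sum_{g\in G}g$ and appeal explicitly to the Krull--Schmidt uniqueness from \cite{KQ20, Ste21}. The two arguments are the same in substance, yours just making explicit the uniqueness step that the paper leaves implicit.
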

\begin{proof}
By Maschke's theorem, there is an isomorphism of double complexes $A\cong \bigoplus_{\chi} A[\chi]$, where $\chi$ runs over the characters of $H$ and $A[\chi]=\{\omega\in A\mid h.\omega=\chi(h)\omega~\forall h\in H\}$ denotes the $\chi$-typical component. But a direct sum of double complexes satisfies the page-$r$-$\del\delbar$-property (resp. the  $dd^c+3$-property) if and only if all summands do.
\end{proof}We say a map between double complexes $f:A\to B$ is a bigraded quasi-isomorphism if for any decomposition $A=A^{sq}\oplus A^{zig}$ and $B=B^{sq}\oplus B^{zig}$ into squares and zigzags the induced map $f:A^{zig}\to B^{zig}$ is an isomorphism. For bounded double complexes, this is equivalent to requiring that $H_{\del}(f)$ and $H_{\delbar}(f)$ are isomorphisms, i.e. to $f$ being an $E_1$-isomorphism, see \cite{Ste21, Ste23}.
\subsection{Quotients of complex manifolds by finite groups}
For a complex manifold $X$ with an action of a finite group $G$, the quotient space $X/G$ has a canonical structure as a normal complex analytic space. Denoting by $p:X\to X/G$ the projection, the structure sheaf on $X/G$ is described as $\Oh_{X/G}(U):=\Oh_{X}(p^{-1}(U))^G$. The singularities of $X/G$ are contained in the collection of all points with nontrivial stabilizer. On the other hand, the singularities in normal analytic spaces have codimension at least two and this inclusion is in general not an equality as the following example illustrates:
\begin{ex}\label{ex: C/pm1}
Let $X=\C$, $G={\pm 1}$ acting by multiplication. Then the complex space $X/G$ is smooth: The map $z\mapsto z^2$, identifies it with $\C$.
\end{ex}

This example contains in some sense all the difference between the singular set and the set with nontrivial stabilizers: Recall that a matrix $M\in GL_n(\C)$ of finite order is called a quasi-reflection if $\dim\ker (M-\id)\geq n-1$, i.e, there is at most one eigenvalue not equal to 1 (including multiplicities), which then is a root of unity. A group $G\subseteq GL_n(\C)$ is called small if it does not contain any quasi-reflections. Note that quasi-reflections have fixed point set of codimension $1$. On the other hand, as a normal complex analytic space $X$ has singularities only in codimension $\geq 2$.
The following is a standard result, essentially the Chevalley-Shephard-Todd Theorem, see e.g. \cite[§7.2]{Ben93}, \cite{BG08}. 

\begin{prop}\label{prop: quasi-reflections}
    Let $X$ be a complex manifold with an action of a finite group $G$. The quotient $X/G$ is a manifold if and only if around every point there are invariant charts in which $G$ acts by quasi-reflections.
\end{prop}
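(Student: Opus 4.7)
The plan is to reduce everything to a local linear model around each point and then invoke the Chevalley--Shephard--Todd theorem. Since $X/G$ is a manifold if and only if it is smooth at every point, and smoothness is local, I only need to analyze a small neighborhood of a point $x \in X$ and its stabilizer $G_x \subseteq G$. Note that points in the same $G$-orbit give rise to the same point in the quotient with conjugate stabilizers, so I may work with one representative $x$ per orbit.

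First I would establish equivariant linearization: around any $x$ fixed by $G_x$, there is a $G_x$-invariant open neighborhood $U$ of $x$ and a biholomorphism $\phi\colon U \to V \subseteq \C^n$ sending $x$ to $0$ that conjugates the $G_x$-action on $U$ to the linear action of $G_x$ on $\C^n$ via the isotropy representation on $T_xX$. This is the standard equivariant Bochner linearization argument: average an arbitrary chart centered at $x$ over $G_x$ to produce a $G_x$-equivariant one, using that $G_x$ is finite, hence compact. Passing to the quotient, a neighborhood of $[x]$ in $X/G$ is biholomorphic to a neighborhood of $0$ in $\C^n/G_x$, where $G_x$ acts through a faithful finite subgroup of $GL_n(\C)$.

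The problem thus reduces to: a linear action of a finite $G_x \subseteq GL_n(\C)$ on $\C^n$ yields a smooth quotient (equivalently, $\C^n$ itself) near $0$ if and only if $G_x$ is generated by quasi-reflections. This is exactly the analytic Chevalley--Shephard--Todd theorem, as stated for instance in \cite[§7.2]{Ben93} and \cite{BG08}, so I would cite it rather than reprove it. The easy direction is ``only if'': any element whose fixed locus has codimension $\geq 2$ gives a non-trivial singularity in the quotient by codimension/branch-locus considerations (normality of the quotient plus purity of the branch locus forces the ramification to happen in codimension one, i.e.\ along the fixed hyperplanes of quasi-reflections). The ``if'' direction is the genuine content of Chevalley--Shephard--Todd: when $G_x$ is generated by quasi-reflections, the invariant algebra $\C[z_1,\ldots,z_n]^{G_x}$ is a polynomial ring on $n$ homogeneous generators, and passing to analytic germs shows that $\C^n/G_x$ is smooth at $0$.

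The main obstacle is precisely the invocation of the ``if'' direction of Chevalley--Shephard--Todd in the analytic (rather than purely algebraic) category; everything else is a standard equivariant chart argument. Since the paper attributes the statement to this well-known theorem and refers to the standard references, I would simply assemble the local linearization step and quote the theorem, formulating the conclusion as the assertion that each point of $X$ admits a $G_x$-invariant chart in which $G_x$ acts linearly by a group generated by quasi-reflections exactly when $X/G$ is smooth there.
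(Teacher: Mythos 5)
The paper offers no proof of this proposition beyond the citation of \cite[\S 7.2]{Ben93} and \cite{BG08}, and your argument---equivariant Bochner linearization of the stabilizer action followed by the analytic Chevalley--Shephard--Todd theorem---is exactly the standard reduction those references supply, so the proposal is correct and matches the paper's approach. One caveat on your aside about the ``only if'' direction: it is not true that a single element with fixed locus of codimension $\geq 2$ forces a singularity (a $3$-cycle in $\mathfrak{S}_3$ acting on $\C^3$ by permuting coordinates has fixed locus of codimension $2$, yet the quotient is smooth); the purity-of-branch-locus argument must be applied to the induced action of the \emph{small} quotient group $G_x/H$, where $H$ is the subgroup generated by the quasi-reflections, not elementwise.
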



\subsection{Smooth differential forms}
Given a quotient $p:X\to X/G$ of a complex manifold by a finite group, just as one defines holomorphic functions, one may also define sheaves of invariant holomorphic forms and smooth $\C$-valued functions and forms on the complex space $X/G$ by $\Omega_{X/G}^{inv}(U):=\Omega_{X/G}(p^{-1}(U))^G$ and $\cA_{X/G}^{inv}(U):=\cA_X(p^{-1}(U))^G$. As on usual differential forms, these carry a real structure, a bigrading by type, differentials $\del,\delbar$ and a wedge product such that on every open set $U\subseteq X/G$, the global sections $(\cA_{X/G}^{inv}(U),\del,\delbar,\wedge)$ form a bigraded, bidifferential algebra with real structure. Now, if the complex space $X/G$ is again a complex manifold, that gives us a priori a second notion of holomorphic, resp. smooth forms, namely those which are intrinsically defined from the manifold structure. Denote these by $\Omega_{X/G}$, resp. $\cA_{X/G}$. The (local) pullbacks induce an injective map of sheaves $\Omega_{X/G}\to \Omega_{X/G}^{inv}$ and $\cA_{X/G}\to \cA_{X/G}^{inv}$, compatible with real structure, bigrading, differentials and products. In general these maps are not isomorphisms:

\begin{ex}
Consider $X=\C$ and $G=\{\pm 1\}$ as in Example \ref{ex: C/pm1}. The function $(z\mapsto|z|^2)$ is a global section of $(\cA_{X/G}^{inv})^{0,0}$, but it is not a global section of $\cA_{X/G}^{0,0}$ since it is not the pullback of any smooth function on $\C$ via $(z\mapsto z^2)$.
\end{ex}

However, we have:

\begin{prop} Let $X/G$ be non-singular. Then $\Omega^{inv}_{X/G}\cong \Omega_{X/G}$ and both $((\cA_{X/G}^{inv})^{p,\cdot},\delbar)$ and $(\cA_{X/G}^{p,\cdot},\delbar)$ are fine resolutions of $\Omega^p_{X/G}$.
\end{prop}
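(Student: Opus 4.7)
The plan is to reduce both assertions to local questions. Fix $y \in X/G$ and a preimage $x \in X$ with stabilizer $G_x$. By the preceding proposition, after possibly shrinking, we may choose a $G_x$-invariant chart around $x$ in which $G_x \subseteq \GL_n(\C)$ acts by quasi-reflections.

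\textbf{Step 1 (The isomorphism $\Omega^{inv}_{X/G} \cong \Omega_{X/G}$).} Only the surjectivity of the natural injection requires work. By the Chevalley--Shephard--Todd theorem, the invariant ring $\Oh_{X,x}^{G_x}$ is a power series ring in $n$ algebraically independent homogeneous invariants $\theta_1,\dots,\theta_n$, and these serve as local holomorphic coordinates on $X/G$ around $y$. By Solomon's theorem on invariants of reflection groups, the $\Oh_{X,x}^{G_x}$-module of $G_x$-invariant holomorphic $p$-forms is freely generated by the forms $\{d\theta_{i_1}\wedge\cdots\wedge d\theta_{i_p}\}_{i_1<\cdots<i_p}$. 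Since every such generator is the pullback under $p$ of the corresponding form in the $\theta_i$-coordinates on $X/G$, and invariant coefficients pull back to holomorphic coefficients, every invariant holomorphic $p$-form is locally in the image of the pullback.

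\textbf{Step 2 (Fineness and the smooth resolution).} Both $(\cA_{X/G}^{inv})^{p,\sspace}$ and $\cA_{X/G}^{p,\sspace}$ are sheaves of modules over the sheaf of smooth $\C$-valued functions on the complex manifold $X/G$, which admits partitions of unity; hence both are fine. That $(\cA_{X/G}^{p,\sspace},\delbar)$ is a resolution of $\Omega_{X/G}^p$ is the standard Dolbeault lemma on the manifold $X/G$.

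\textbf{Step 3 (The invariant resolution).} I would verify exactness stalkwise on $X/G$. The kernel of $\delbar$ on $(\cA_{X/G}^{inv})^{p,0}$ consists of $G$-invariant holomorphic $p$-forms on $X$, which by Step 1 coincides with $\Omega_{X/G}^p$. For $q\geq 1$, a germ at $y$ of a $\delbar$-closed section of $(\cA_{X/G}^{inv})^{p,q}$ corresponds to a $G_x$-invariant germ of $(p,q)$-form $\omega$ at $x$ on some $G_x$-stable neighborhood $V$. The usual Dolbeault lemma on $V$ produces, after shrinking, a form $\eta$ with $\delbar\eta=\omega$. Since $G_x$ acts by biholomorphisms, the action commutes with $\delbar$, so
\[
\tilde\eta := \frac{1}{|G_x|}\sum_{g\in G_x} g^*\eta
\]
is a $G_x$-invariant primitive of $\omega$.

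The main technical hurdle is Step 1: identifying invariant holomorphic forms with forms on the quotient requires not only Chevalley--Shephard--Todd for invariant functions but the stronger statement that the bigraded algebra of invariant forms is generated, as a module over the invariant functions, by the differentials of a system of basic invariants. Once this is in place, Steps 2 and 3 are routine applications of partitions of unity, the standard Dolbeault lemma on $X$, and the averaging trick for a finite group acting holomorphically.
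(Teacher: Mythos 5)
Your proposal is correct and follows essentially the same route as the paper: reduce to quasi-reflection charts, use the standard Dolbeault lemma on $X/G$ for the intrinsic complex, obtain fineness from partitions of unity, and prove exactness of the invariant complex by averaging a local $\delbar$-primitive over the stabilizer. The only difference is that your Step 1 spells out, via Chevalley--Shephard--Todd and Solomon's theorem on invariant forms of reflection groups, the identification $\Omega^{inv}_{X/G}\cong\Omega_{X/G}$ that the paper dispatches with a one-line citation of Proposition \ref{prop: quasi-reflections} --- a welcome amount of extra care, since that identification is indeed the least formal step.
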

\begin{proof}
For holomorphic forms, this is a consequence of Proposition \ref{prop: quasi-reflections} and for $(\cA_{X/G}^{p,\cdot},\delbar)$ this is the standard Poincar\'e Lemma. So, we only have to consider $(\Omega^{inv}_{X/G})^p\hookrightarrow((\cA_{X/G}^{inv})^{p,\cdot},\delbar)$. To show that the sheaves $(\cA_{X/G}^{inv})^{p,q}$ are fine, we need to find partitions of unity for $1\in (\cA_{X}^{l.i.})^{0,0}$ subordinate to any open cover of $X/G=\bigcup V_i$. This corresponds to finding $G$-equivariant partitions of unity subordinate to the cover for $X=\bigcup p^{-1}(V_i)$, where $p:\tilde{U}\to X$ is the projection map. These exist by an averaging argument. Next, we have to prove the exactness of 
\[
0\to (\Omega_{\tilde U/G}^{inv})^p\to (\cA_X^{inv})^{p,0}\to (\cA_X^{inv})^{p,1}\to\cdots
\]
This follows from the usual Poincar\'e Lemma on $X$ because taking $G$-invariants is exact (on stalks, this is a sequence of $\C$-vector spaces and one can average a given primitive to be $G$-invariant.)
\end{proof}
Let us denote by $A_X^G=(\cA_{X/G}^{inv}(X/G),\del,\delbar)$ and $A_{X/G}=(\cA_{X/G}(X/G),\del,\delbar)$ the double complexes of global sections. Then we obtain:
\begin{cor}\label{cor: invariant bicomplex smooth case}
Let $X/G$ be non-singular. The inclusion of double complexes $A_{X/G}\hookrightarrow A_{X}^{G}$ is a bigraded quasi-isomorphism.
\end{cor}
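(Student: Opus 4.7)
The plan is to deduce the corollary from the preceding proposition by a double application of the standard fact that fine resolutions compute sheaf cohomology. Concretely, both $(\cA_{X/G}^{p,\cdot},\delbar)$ and $((\cA_{X/G}^{inv})^{p,\cdot},\delbar)$ are, by the proposition, fine resolutions of the coherent sheaf $\Omega^p_{X/G}=(\Omega^{inv}_{X/G})^p$, and the sheaf inclusion $\cA_{X/G}^{p,q}\hookrightarrow (\cA_{X/G}^{inv})^{p,q}$ extends the identity on $\Omega^p_{X/G}$. Taking global sections, both complexes therefore compute $H^q(X/G,\Omega^p_{X/G})$, and the induced map between them is the identity on this cohomology group. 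In other words, the inclusion $A_{X/G}\hookrightarrow A_X^G$ induces an isomorphism on Dolbeault cohomology in every bidegree $(p,q)$.

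Next, I would invoke the real structure: both sides carry the conjugation action, and the inclusion is compatible with it. Conjugating the above argument (equivalently, running it for the complex-conjugate sheaves) yields that the inclusion also induces an isomorphism on the $\del$-cohomology $H^{p,q}_{\del}$. Thus the inclusion is an $E_1$-isomorphism of bounded double complexes in the sense recalled at the end of Section \ref{sec: cohm}.

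Finally, I would appeal to the characterization from \cite{Ste21, Ste23} stated just before the subsection on quotients: for bounded double complexes, $E_1$-isomorphisms coincide with bigraded quasi-isomorphisms. Applying this gives the claim.

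I do not expect any real obstacle here, since the serious content — that the invariant smooth forms still form a fine $\delbar$-resolution of $\Omega^p_{X/G}$ — has already been handled in the preceding proposition. The only mild subtlety is checking compatibility with the real structure in order to upgrade the Dolbeault-cohomology isomorphism to an $E_1$-isomorphism, but this is automatic from the fact that the inclusion is a map of double complexes with real structure.
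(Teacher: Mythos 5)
Your proposal is correct and is exactly the argument the paper intends: the corollary is stated without proof precisely because it follows from the preceding proposition by comparing the two fine resolutions of $\Omega^p_{X/G}$ (giving the isomorphism on $H_{\delbar}$), using the real structure for $H_{\del}$, and invoking the characterization of bigraded quasi-isomorphisms as $E_1$-isomorphisms.
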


We note that all the discussion here was local, and so one may immediately generalize the above statements to orbifolds $\cY$ (spaces which are locally of the form $X/G$), and the analogously defined sheaves and spaces of 'locally invariant forms' $\cA^{l.i.}_{\cY}$:

\begin{prop} Let $\cY$ be an orbifold with non-singular underlying complex space $Y$. Both $((\cA_\cY,{l.i.})^{p,\cdot},\delbar)$ and $(\cA_{Y}^{p,\cdot},\delbar)$ are fine resolutions of $\Omega^p_{Y}$.
\end{prop}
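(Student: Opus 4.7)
The plan is to reduce the statement to the local case handled in the previous proposition. By definition, every point $y \in Y$ admits an open neighborhood $V \subseteq Y$ together with an orbifold chart, i.e. a finite group $G$ acting on a complex manifold $X$ together with an identification $X/G \cong V$ as complex analytic spaces. Since $Y$ is assumed to be nonsingular, each such $V = X/G$ satisfies the hypothesis of the previous proposition, so on each chart we may freely invoke the identifications established there.

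First I would dispose of the easy half: $(\cA_Y^{p,\cdot},\delbar)$ is the ordinary Dolbeault complex on the complex manifold $Y$, and hence is a fine resolution of $\Omega^p_Y$ by the standard Poincar\'e--Dolbeault--Grothendieck lemma. So all content lies in the locally invariant resolution. For the sheaf identification $\Omega_Y^p \cong (\Omega_\cY^{l.i.})^p$, this is a local statement on $Y$; restricting to each chart $V \cong X/G$ and applying the previous proposition yields the isomorphism there, and these isomorphisms glue automatically because they are induced by the pullback along orbifold charts and hence are canonical.

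For the fineness of $(\cA_\cY^{l.i.})^{p,q}$, I would argue by constructing partitions of unity inside $(\cA_\cY^{l.i.})^{0,0}$ subordinate to any open cover $Y = \bigcup W_\alpha$. After refining, I may assume each $W_\alpha$ is contained in some orbifold chart $V_{i(\alpha)} = X_{i(\alpha)}/G_{i(\alpha)}$. Pick an ordinary smooth partition of unity on $X_{i(\alpha)}$ subordinate to $p_{i(\alpha)}^{-1}(W_\alpha)$ and average it over the (finite) action of $G_{i(\alpha)}$; the result is a locally invariant smooth function on $V_{i(\alpha)}$, and these glue to a section of $\cA_\cY^{l.i.}$ on $Y$ because on overlapping charts the locally invariant functions agree with the corresponding pullbacks.

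Finally, exactness of
\[
0 \to (\Omega_\cY^{l.i.})^p \to (\cA_\cY^{l.i.})^{p,0} \xrightarrow{\delbar} (\cA_\cY^{l.i.})^{p,1} \xrightarrow{\delbar} \cdots
\]
is a stalk-level assertion. At a point $y \in Y$ I pick an orbifold chart $X/G$ around $y$ and apply the proof of the previous proposition verbatim: the ordinary Poincar\'e lemma on $X$ gives a local primitive, which can be made $G$-invariant by averaging, since $G$-invariants form an exact functor on $\C$-vector spaces. The only thing to check is that averaging is compatible with the restriction of orbifold charts, but this is immediate from the equivariance of the chart transition maps built into the orbifold structure. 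I do not expect any genuine obstacle here; the argument is essentially a patching of the previous proposition, and the main point is simply to make sure that "locally invariant" is interpreted consistently across orbifold charts.
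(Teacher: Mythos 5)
Your proposal is correct and follows essentially the same route as the paper: the paper gives no separate argument for the orbifold statement, merely noting that the proof of the preceding proposition (identification of invariant with intrinsic holomorphic forms via the quasi-reflection criterion, fineness via averaged partitions of unity, and stalk-level exactness via the Poincar\'e lemma plus exactness of taking $G$-invariants) is entirely local and therefore carries over to orbifold charts verbatim. Your write-up simply spells out this chart-by-chart reduction and gluing in more detail.
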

\begin{cor}
On an orbifold $\cY$ with smooth underlying complex space $Y$, the inclusion of double complexes $A_X\hookrightarrow A_{\cY}^{l.i.}$ is a bigraded quasi-isomorphism.
\end{cor}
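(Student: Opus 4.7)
The plan is to reduce this directly to the preceding proposition, which asserts that both $(\cA_Y^{p,\sspace},\delbar)$ and $((\cA_{\cY}^{l.i.})^{p,\sspace},\delbar)$ are fine resolutions of $\Omega^p_Y$, exactly mirroring the way Corollary \ref{cor: invariant bicomplex smooth case} is deduced from its local version.

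First, I would invoke the characterization recalled at the end of Section \ref{sec: cohm}: for a map of bounded double complexes, being a bigraded quasi-isomorphism is equivalent to inducing isomorphisms on both $H_\del$ and $H_{\delbar}$. Since the inclusion $A_Y\hookrightarrow A_\cY^{l.i.}$ is manifestly compatible with the real structures on both sides (both come from complex conjugation on coefficients of smooth forms), it suffices to check the statement for $H_{\delbar}$; the corresponding statement for $H_\del$ then follows by applying $\sigma$ on both sides.

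Second, for fixed $p$, the inclusion of sheaves $\cA_Y^{p,\sspace}\hookrightarrow (\cA_\cY^{l.i.})^{p,\sspace}$ is a morphism of bounded-below complexes of fine sheaves which lifts the identity map on $\Omega_Y^p$. By the preceding proposition both sides are fine resolutions of $\Omega_Y^p$, so taking global sections and cohomology, both $H_{\delbar}^{p,q}(A_Y)$ and $H_{\delbar}^{p,q}(A_\cY^{l.i.})$ compute the sheaf cohomology $H^q(Y,\Omega_Y^p)$, and the induced map is the identity on this common target. Thus $H_{\delbar}$ of the inclusion is an isomorphism.

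Combining these two steps yields the corollary. There is no real obstacle here: the entire content is packaged in the preceding proposition, and the corollary is just the reformulation in terms of the total double complex of global sections. The only point one has to be slightly careful about is the compatibility of the resolutions with the real structure, which is immediate from the fact that the inclusion $A_Y\hookrightarrow A_\cY^{l.i.}$ commutes with $\sigma$ and exchanges $\del$ with $\delbar$ under it.
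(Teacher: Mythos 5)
Your argument is correct and is exactly the intended deduction: the paper states this corollary without a separate proof precisely because it follows from the preceding proposition in the way you describe (both sheaf complexes are fine resolutions of $\Omega_Y^p$, so the inclusion induces isomorphisms on $H_{\delbar}$, hence on $H_{\del}$ by the real structure, and one concludes via the equivalence between $E_1$-isomorphisms and bigraded quasi-isomorphisms for bounded double complexes). No gaps; this mirrors the deduction of Corollary \ref{cor: invariant bicomplex smooth case} as you intended.
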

In this article, all orbifolds we encounter will be of `global quotient type', i.e. of the form $X/G$ as before, so we omit a more detailed discussion.
\section{Proof of Theorem \ref{thmintro: resolution}}

\begin{lem}
    Let $X$ be a compact complex manifold and $\sigma$ an automorphism of finite order with fixed point set $X^\sigma\subseteq X$. Then,
    \begin{enumerate}
        \item\label{it: submanifold} $X^\sigma\subseteq X$ is a complex submanifold,
        \item\label{it: lift} given an automorphism $\tau$ commuting with $\sigma$ and a $\tau$-stable submanifold $Z\subseteq X^\sigma$, $\tau$ lifts to an automorphism of $Bl_{Z}X$.
    \end{enumerate}
\end{lem}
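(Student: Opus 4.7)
The plan is to use two standard facts: for (i), Bochner-style equivariant linearization at each fixed point; for (ii), the universal property of the blow-up of a smooth center. Neither step should present serious difficulty.

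For (i), set $n = \operatorname{ord}(\sigma)$ and pick $p \in X^\sigma$. The derivative $A := d\sigma_p \in \GL(T_pX)$ satisfies $A^n = \id$ and is therefore diagonalizable over $\C$. Given any holomorphic chart $\phi$ centered at $p$, with $d\phi_p$ identifying $T_pX \cong \C^{\dim X}$, I average to produce
\[
\tilde\phi := \frac{1}{n}\sum_{k=0}^{n-1} A^{-k} \circ \phi \circ \sigma^k.
\]
A direct computation yields $d\tilde\phi_p = \id$, so $\tilde\phi$ is a local biholomorphism by the inverse function theorem, and a telescoping argument gives $\tilde\phi \circ \sigma = A \circ \tilde\phi$. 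Hence in the chart $\tilde\phi$, the automorphism $\sigma$ acts linearly as $A$, so $X^\sigma$ corresponds locally to the complex linear subspace $\ker(A - \id) \subseteq \C^{\dim X}$. This exhibits $X^\sigma$ as a complex submanifold.

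For (ii), I invoke the universal property of the analytic blow-up $\pi : Bl_Z X \to X$ of the smooth center $Z$ (which is a manifold by (i) together with the hypothesis $Z \subseteq X^\sigma$): a holomorphic map $f : Y \to X$ from a complex manifold factors through $\pi$, necessarily uniquely, if and only if the pulled-back ideal sheaf $f^{-1}(\mathcal{I}_Z)\cdot\mathcal{O}_Y$ is invertible. The hypothesis $\tau(Z) = Z$ gives $\tau^{-1}(\mathcal{I}_Z) = \mathcal{I}_Z$, so
\[
(\tau \circ \pi)^{-1}(\mathcal{I}_Z)\cdot \mathcal{O}_{Bl_Z X} = \pi^{-1}(\mathcal{I}_Z)\cdot \mathcal{O}_{Bl_Z X},
\]
which is invertible by construction of $\pi$. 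The universal property then yields a unique holomorphic $\tilde\tau : Bl_Z X \to Bl_Z X$ with $\pi \circ \tilde\tau = \tau \circ \pi$. Applying the same reasoning to $\tau^{-1}$ produces a two-sided inverse, so $\tilde\tau$ is an automorphism.

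I do not expect a genuine obstacle: both items rely on classical recipes. The only subtlety worth flagging is that the commutativity hypothesis $[\sigma,\tau] = 1$ is not actually used in the lifting itself; only $\tau(Z) = Z$ is needed. That extra hypothesis is presumably there for the next step in the proof of Theorem \ref{thmintro: resolution}, where one wants the lifts of all generators of the abelian group $G$ to continue to commute on $Bl_Z X$ (which then follows automatically from uniqueness of the lift), so that the blow-up construction can be iterated along nested fixed loci.
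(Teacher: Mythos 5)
Your proof is correct and follows essentially the same route as the paper: the paper simply asserts the existence of an equivariant linearizing chart (which is exactly the Cartan--Bochner averaging argument you spell out) and invokes the universal property of the blow-up for the lift. Your added observation that only $\tau(Z)=Z$ is needed for the lifting, with commutativity ensuring the lifts themselves commute by uniqueness, is accurate.
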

\begin{proof}
   Around a point $x\in X^\sigma$, one may find an equivariant chart $(U,\varphi)$ such that on $\varphi(U)$, $\sigma$ acts by a diagonal matrix of which the first $k$ eigenvalues are not equal to $1$ and the next $n-k$ are equal to $1$. Thus, in this chart, the fixed point set is given by $z_1=\dots=z_k=0$. This proves \ref{it: submanifold}. Statement \ref{it: lift} follows from the universal property of the blow-up.\end{proof}

\begin{lem}\label{lem: 2,3}
In the situation of the previous Lemma, if the order of $\sigma$ is $2$ or $3$, there is a bimeromorphic $\sigma$-equivariant map $\hat X\to X$ which is finite sequence of blow-ups in smooth, $\sigma$-invariant centers such that $\hat X/\sigma$ is smooth. 
\end{lem}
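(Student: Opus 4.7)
The strategy is to apply the Chevalley--Shephard--Todd criterion (Proposition \ref{prop: quasi-reflections}): the quotient $\hat X/\sigma$ is smooth precisely when $\sigma$ acts locally by quasi-reflections, i.e., when at every fixed point the linearized action has at most one nontrivial eigenvalue. Since $\sigma$ has prime order $p\in\{2,3\}$, at each fixed point one can find $\sigma$-equivariant coordinates in which $\sigma$ is diagonal, $\sigma=\diag(\zeta^{a_1},\ldots,\zeta^{a_n})$ with $\zeta=e^{2\pi i/p}$ and $a_i\in\{0,\ldots,p-1\}$. Thus each connected component of $X^\sigma$ is smooth (by the previous Lemma) and its codimension equals the number of nonzero $a_i$'s. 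Components of codimension $\leq 1$ already give quasi-reflections, so the task is to eliminate, by iterated equivariant blow-ups in smooth $\sigma$-invariant centers, those of codimension $\geq 2$.

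For $p=2$, a bad component $Z$ has local model $\sigma=\diag(-1,\ldots,-1,1,\ldots,1)$ with $c\geq 2$ entries equal to $-1$. A direct chart computation on the blow-up $Bl_ZX$ shows that in the standard chart with coordinates $(z_1,w_2=z_2/z_1,\ldots,w_c=z_c/z_1,z_{c+1},\ldots,z_n)$ the lifted action becomes $\sigma(z_1)=-z_1$, $\sigma(w_i)=w_i$, $\sigma(z_j)=z_j$, so it is a quasi-reflection on the entire exceptional divisor. Therefore a single blow-up of $Z$ removes it from the bad locus, and iterating over components of $X^\sigma$ in order of increasing dimension yields $\hat X$.

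For $p=3$, write the local model at a fixed point as $\diag(\omega^{a_1},\ldots,\omega^{a_n})$ and set $a=\#\{a_i=1\}$, $b=\#\{a_i=2\}$. If $a=0$ or $b=0$ (``pure'' case), one blow-up of the fixed component works exactly as for $p=2$. The main technical case is $a,b\geq 1$ (``mixed''): here a chart computation on $Bl_ZX$ shows that in the chart $z_1\neq 0$ (with $a_1=1$) the lifted action becomes $\diag(\omega,1,\ldots,1,\omega,\ldots,\omega,1,\ldots,1)$ with $b+1$ entries equal to $\omega$, and symmetrically in the chart of an $\omega^2$-eigenvector; in particular the new fixed loci on the exceptional divisor split as two disjoint projective subspaces, each now of \emph{pure} type. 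A second round of blow-ups in these pure centers then reduces every local action to a quasi-reflection.

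The principal difficulty, and where care is needed, is the global bookkeeping: one must ensure at each stage that the chosen centers remain smooth and $\sigma$-invariant, that lifted components retain the eigenvalue types recorded above, and that the procedure terminates. This is handled by always choosing a component of minimal dimension among those with nontrivial quasi-reflection defect; since blow-ups are isomorphisms away from their centers, the strict transforms of the remaining fixed components are unaffected, and the $\sigma$-equivariant lifting is guaranteed by part \textit{(ii)} of the previous Lemma. Tracking the pair $(\max \text{codim of bad component},\,\text{number of bad components})$ lexicographically — noting that mixed components become pure after one blow-up and pure components disappear after one further blow-up — shows the algorithm stops after finitely many steps, producing the desired $\hat X$.
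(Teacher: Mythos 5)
Your proposal is correct and follows essentially the same route as the paper: diagonalize the local action at fixed points, blow up the fixed locus, and check in the standard charts that order $2$ yields quasi-reflections after one blow-up while order $3$ becomes ``pure'' after one blow-up and quasi-reflective after a second. Your added bookkeeping on termination and the disjointness of centers only makes explicit what the paper leaves implicit in ``blow up the new fixed points again and repeat the argument.''
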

\begin{proof}
    Let us work locally around a fixed point of $\sigma$ and assume we are in a chart s.t. $\sigma$ acts diagonally with $k\geq 2$ nontrivial eigenvalues. We show that by blowing up we can reduce the $k$ until all local actions are by quasi-reflections, so that the quotient is smooth.
    
    Assume first the order of $\sigma$ is $2$ and the action is by a diagonal matrix  $(-1,\dots,-1,1,\dots,1)$ on (an open set in) $\C^n$, with coordinates $z_1,\dots,z_n$. We may assume $k\geq 2$, otherwise $\sigma$ acts as a quasi-reflection. Then the blow up of the fixed point set is given by \[\{((z_1,\dots,z_l),[x_1,\dots,x_k])\in \C^n\times \CP^{k-1}\mid z_rx_s=z_sx_r\},\] which is covered by the $k$ invariant open sets $V_i:=\{x_i\neq 0\}$ with coordinates $v^i_j=x_j/x_i$ for $i\neq j$ and $v^i_i=z_i$. Thus, the induced action of $\sigma$ on $V_i$ is still diagonal, and in fact a reflection with only nontrivial eigenvalue in the $i$-th position. As such, the quotient is smooth.

    For the case that the order of $\sigma$ is $3$, we argue similarly. Write $\zeta=\exp(2\pi i/3)$. Lets assume we are in a linear diagonal chart around a fixed point and $\sigma$ has $k_1$ eigenvalues $\zeta$ and $k_2$ eigenvalues $\zeta^2$, so after possibly reordering the coordinates we may assume $\sigma$ is a diagonal matrix with entries $(\zeta,\dots,\zeta,\zeta^2,\dots,\zeta^2,1,\dots,1)$. Again, denoting $V_i$ as above, we see that for $i=1,\dots,k_1$, the induced action is given by a diagonal matrix $(1,\dots,\zeta,\dots1,\zeta,\dots,\zeta,1,\dots,1)$ with $k_2+1$ eigenvalues $\zeta$ and for $i=k_1+1,\dots,k$, the action is given by a diagonal matrix $(\zeta^2,\dots,\zeta^2,1,..,1,\zeta^2,1,\dots,1)$ with $k_1+1$ eigenvalues $\zeta$. So we see that if $k_1$ or $k_2$ is zero the induced action is by quasi-reflections. If not, we can blow up the new fixed points again and repeat the argument.
\end{proof}
\begin{rem}Lemma \ref{lem: 2,3} is a folklore result and it is well-known that it fails in general for cyclic groups of order $\geq 4$, see e.g. \cite[p.31f]{Oda88}.
\end{rem}

\begin{proof}[Proof of Theorem \ref{thmintro: resolution}]

Without loss of generality, we may assume $G$ acts effectively. Pick an element in $\sigma\in G$ of order $2$ or $3$. Then by the previous Lemmata, there exists a finite sequence of blow-ups in smooth invariant centers $\hat X\to X$ such that $\hat X/\sigma$ is smooth and a lift of the action of $G$ to $\hat X$ making this an equivariant map. Now, by the blow-up formula, \cite{Ste21b}, \cite{ASTT}, we have a $G$-equivariant bigraded quasi-isomorphism $A_{\hat X}\simeq A_X\oplus R$ with $A$ as in the theorem. Hence, taking $\sigma$-invariants, and using Cor. \ref{cor: invariant bicomplex smooth case} there is a $G/\sigma$-equivariant quasi-isomorphism $A_{\hat X/\sigma}\simeq A_X^{\sigma=\id}\oplus R'$ with $R'$ as in the Theorem. Furthermore, $\hat X/\sigma$ has an action of $G/\sigma$, which has lower order, so the the theorem follows by induction.    
\end{proof}

We will carry out the procedure in this proof explicitly in the first construction below.
\begin{rem}
In the proof, the resolusion map $\tilde X\to X/G$ is the composition of a sequence of maps $f_{i}:X_{i+1}\to X_{i}$ such that each $X_{i}$ is a global quotient of a nonsingular space by $G/H_{i}$ for a subgroup $H_{i}\subset G$ and each map $f_{i}:X_{i+1}\to X_{i}$ comes from a blow up at $G/H_{i}$-invariant center.
We should notice that $f_{i}:X_{i+1}\to X_{i}$ may not be extended to a $G$-equivariant blow-up.
We may observe this matter in Construction I.
\end{rem}

\begin{rem} The addendum in Theorem \ref{thmintro: resolution} concerning the fundamental group follows from a standard van-Kampen argument on blow-ups. We note such a statement is true in greater generality, \cite[Thm. 7.5.]{kol}, \cite{tak}.
\end{rem}

\begin{rem}
The structure of $R$ in \eqref{eqn: A=AG+R} in Theorem \ref{thmintro: resolution} (resp. by \eqref{eqn: (0,p)} and \eqref{eqn: (p,0)}) one sees that $X/G$ (resp. any space $Y$ with rational singularities) is further holomorphically simply connected in the sense of \cite{Ste23} (i.e. connected and $h^{1,0}=h^{0,1}=0$) if and only if the (resp. any) resolution is holomorphically simply connected. The manifolds in constructions \ref{constr: p1ddbar}, \ref{constr: ddc+3}, \ref{constr: FSSpi1} below are also holomorphically simply connected. Note that there are simply connected spaces which are not holomorphically simply connected (e.g. the Calabi-Eckmann structures on products of spheres of odd dimension $\geq 3$).
\end{rem}

\begin{rem}
     Quotient singularities are rational, and so for \textit{any} resolution $\pi:\tilde X\to X/G$, one has 
     \begin{equation}\label{eqn: (0,p)}
         H^p(X/G,\Oh_{X/G})\cong H^p(\tilde X,\Oh_{\tilde X}).
     \end{equation} 
     Further, if $j:(X/G)^{reg}\to X/G$ is the inclusion, one may also define the differential forms on $X/G$ as $\Omega_{X/G}:=j_*\Omega_{(X/G)^{reg}}$. Then, again for \textit{any} resolution, one has 
     \begin{equation}\label{eqn: (p,0)}
         H^0(X/G,\Omega_{X/G}^p)\cong H^0(\tilde X,\Omega_X^p).
     \end{equation}
     In both cases, the analogous isomorphisms in $H^{0,p}$ and $H^{p,0}$ continue to hold if one replaces $X/G$ by any complex space $Y$ with rational singularities \cite{KS21}, \cite{GKK10}.
     
    For the specific resolutions from Theorem \ref{thmintro: resolution}, the isomorphism \eqref{eqn: A=AG+R} recovers \eqref{eqn: (0,p)} and \eqref{eqn: (p,0)} but, because $R$ satisfies further conditions than just $R^{p,0}=R^{0,p}=0$, \eqref{eqn: A=AG+R} contains finer information, also `away from the boundary': For example, the numbers $b_3^{2,2}$ are preserved under the resolution, and so is the (non-)existence of nontrivial Fr\"olicher differentials landing in degrees $(n-1,1)$. This kind of finer information is crucially used in the following constructions. In view of \eqref{eqn: (p,0)} and \eqref{eqn: (0,p)} holding for all rational singularities and resolutions, it seems to be an interesting question in what generality the conclusion \eqref{eqn: A=AG+R} of Theorem \ref{thmintro: resolution} remain valid.
\end{rem}

\section{Applications of Theorem \ref{thmintro: resolution}}
\subsection{Construction \ref{constr: p1ddbar}}\label{sec: constr p1ddbar}
We will construct the manifolds $M_b$ as the resolution of Theorem \ref{thmintro: resolution} of a singular quotient of a certain solvmanifold $X_b$.

Let $G:=\C\ltimes _{\phi}\C^{2}$ such that \[
\phi(x+\sqrt{-1}y)=\left(
\begin{array}{cc}
e^{x}& 0  \\
0&    e^{-x}  
\end{array}
\right).\]
Quotients of $G$ by lattices $\Gamma=(a\Z+bi\Z)\ltimes_\phi \Delta$ are known as completely solvable Nakamura-manifolds and their cohomologies can be computed in terms of a finite dimensional sub double complex $C_b\subseteq A_{\Gamma\backslash G}$, see \cite{AK}. In general, the result will depend on the lattice $\Gamma$, more precisely, on the value of $b\in \R-\{0\}$. We will now construct a particular family of lattices $\Gamma_b$, such that the corresponding manifolds $X_b:=\Gamma_b\backslash G$ admit an automorphism of order $4$.

Let \[
A:=\left(
\begin{array}{cc}
2& 1  \\
1&    1  
\end{array}
\right).\] 
We diagonalize
\[
P^{-1}AP=\left(
\begin{array}{cc}
 \frac{3+\sqrt{5}}{2}& 0  \\
0&    \frac{3-\sqrt{5}}{2} 
\end{array}
\right)\quad \text{with} \quad P:=\left(
\begin{array}{cc}
1& \frac{-1+\sqrt{5}}{2}  \\
\frac{-1+\sqrt{5}}{2}&    -1  
\end{array}
\right).\]
Take $a\in \R$ so that $e^{a}=\frac{3+\sqrt{5}}{2}$.
Consider the discrete  subgroup \[
\Delta:=\left\{P^{-1}\left(
\begin{array}{cc}
m  \\
n 
\end{array}\right): m,n \in \Z[i]\right\}.
\]
Then for any  $k\in \Z$ and $y\in \R$, $\phi(ka+\sqrt{-1}y)P^{-1}=P^{-1}A^{k}$
and hence $\phi(k+\sqrt{-1}y)(\Delta)\subset \Delta$.
For  $\Lambda =a\Z+b\sqrt{-1}\Z$, with $\R\ni b\not=0$, $\Gamma=\Lambda\ltimes_{\phi}\Delta\subset G$ is a lattice in $G$. We denote by $X_b=\Gamma\backslash G$ the resulting solvmanifold. We consider \[
\sigma:= \left(
\begin{array}{ccc}
-1& 0&0  \\
0&    0&1\\
0&-1&0 
\end{array}
\right)
\] as a biholomorphic automorphism of $G$.
Since 
\[\left(
\begin{array}{cc}
    0&1\\
-1&0 
\end{array}
\right)P=-P\left(
\begin{array}{cc}
    0&1\\
-1&0 
\end{array}
\right),\]
we have $\sigma (\Gamma)=\Gamma$. The cyclic group $\langle \sigma\rangle \cong \Z/4\Z$ acts on the solvmanifold $X_{b}$ biholomorphically and we obtain a (singular) complex quotient space $X_{b}/\langle \sigma\rangle$ depending on $b\in \R-\{0\}$.
For  $\langle \sigma^{2}\rangle \cong \Z/2\Z$, we also obtain a complex quotient space $X_{b}/\langle \sigma^2\rangle$ depending on $b\in \R-\{0\}$.\\

There is a finite dimensional double complex $C_b\subseteq A_{X_b}$ such that the inclusion is a bigraded quasi-isomorphism. It was explicitly given in \cite{AK}. For us, only the $\sigma$-invariant part will be relevant, which we list below. Following \cite{AK}, we distinguish three cases, depending on the value of $b$:
\begin{enumerate}
\item\label{item:nakamura-1} $b=2m\pi$ for some integer $m\in\Z$;
\item\label{item:nakamura-2}  $b=(2m+1)\pi$ for some integer $m\in\Z$; 
\item\label{item:nakamura-3}$b\not=m\pi$ for any integer $m\in\Z$.
\end{enumerate}


\begin{prop}
For any $b\in\R-\{0\}$, the complex $C_b^{\sigma=\id}$ satisfies the page-$1$-$\del\delbar$-property. It satisfies the usual $\del\delbar$-property only in case \ref{item:nakamura-3}. In cases \ref{item:nakamura-1} and \ref{item:nakamura-2}, there are nonzero differentials $E_1^{1,1}\to E_1^{2,1}$ and $E_1^{1,2}\to E_1^{2,2}$.
\end{prop}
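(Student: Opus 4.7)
The strategy is to use the explicit finite-dimensional sub double complex $C_b\subseteq A_{X_b}$ provided by \cite{AK} (which is bigraded quasi-isomorphic to $A_{X_b}$) and the action of $\sigma$ on its generators. The first two claims will follow almost formally from known properties of $C_b$ combined with Lemma \ref{lem: invariants}, while the substantive point is to exhibit explicit $\sigma$-invariant length-$2$ zigzags in cases \ref{item:nakamura-1}--\ref{item:nakamura-2} producing the claimed Fr\"olicher differentials.

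First I would record the action of $\sigma$. Using the holomorphic left-invariant $(1,0)$-forms $\phi^1=dz$, $\phi^2=e^{-z}\,dw_1$, $\phi^3=e^{z}\,dw_2$ on $G$ and the formula $\sigma(z,w_1,w_2)=(-z,w_2,-w_1)$, one computes
\[
\sigma^*\phi^1=-\phi^1,\qquad \sigma^*\phi^2=\phi^3,\qquad \sigma^*\phi^3=-\phi^2,
\]
and analogous formulas after conjugation. Since $\sigma$ sends $y=\mathrm{Im}(z)$ to $-y$, the ``exotic'' characters $e^{i\alpha y}$ that enter $C_b$ in cases \ref{item:nakamura-1}--\ref{item:nakamura-2} (and which descend to $X_b$ precisely when $\alpha b\in 2\pi\mathbb Z$) are exchanged with $e^{-i\alpha y}$.

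For the page-$1$-$\partial\bar\partial$ claim: $X_b$ is a holomorphically parallelizable solvmanifold, so it has the page-$1$-$\partial\bar\partial$-property in all three cases (as recalled in the introduction), hence so does $C_b$, and Lemma \ref{lem: invariants} gives the property for $C_b^{\sigma=\mathrm{id}}$. In case \ref{item:nakamura-3} the complex $C_b$ reduces to the Chevalley--Eilenberg complex $\Lambda^*\mathfrak g^*_{\mathbb C}$ with Maurer--Cartan relations $d\phi^1=0$, $d\phi^2=-\phi^1\phi^2$, $d\phi^3=\phi^1\phi^3$ (and conjugates), and $X_b$ is then known to be a $\partial\bar\partial$-manifold, so Lemma \ref{lem: invariants} again gives $\partial\bar\partial$ for the invariants.

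For cases \ref{item:nakamura-1} and \ref{item:nakamura-2}, the extra generators of $C_b$ all have the form $e^{i\alpha y}\cdot\omega$ with $\omega$ a left-invariant form and $\alpha$ in a finite nonzero set determined by $b$; the $\sigma$-invariant combinations are thus spanned by $\cos(\alpha y)\cdot\omega_+$ and $\sin(\alpha y)\cdot\omega_-$ with $\omega_\pm$ in the $(\pm 1)$-eigenspace of $\sigma$ acting on $\Lambda^*\mathfrak g^*_{\mathbb C}$. Using the identity $d e^{i\alpha y}=\tfrac{\alpha}{2}(\phi^1-\bar\phi^1)e^{i\alpha y}$, I would then exhibit for each exceptional $b$ an explicit $\sigma$-invariant length-$2$ $\partial$-zigzag with components in bidegrees $(1,1)$ and $(2,1)$, and symmetrically one in bidegrees $(1,2)$ and $(2,2)$. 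The zigzag-to-$d_r$ correspondence (the Lemma recalled in Section \ref{sec: cohm}) then yields the claimed non-zero Fr\"olicher differentials $E_1^{1,1}\to E_1^{2,1}$ and $E_1^{1,2}\to E_1^{2,2}$. The main obstacle is constructing these explicit zigzags, which requires unpacking AK's description of the extra generators of $C_b$ and checking that some resulting $\bar\partial$-closed $\sigma$-invariant $(1,1)$-form has $\partial$-image representing a nontrivial $\bar\partial$-cohomology class in bidegree $(2,1)$.
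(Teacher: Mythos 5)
Your overall architecture --- reduce to the finite-dimensional complex $C_b$, work out the $\sigma$-action on its generators, and exhibit explicit $\sigma$-invariant length-$2$ zigzags in the exceptional cases --- is the same as the paper's, which simply tabulates $C_b^{\sigma=\id}$ (Table \ref{table:c-nakamura-1}), notes that the differential vanishes identically in case \ref{item:nakamura-3}, and exhibits the invariant form $\esp^{-2z_1}\de z_{2\bar2}+\esp^{2z_1}\de z_{3\bar3}$ with nonzero $d_1$-image in $E_1^{2,1}$. However, your derivation of the page-$1$-$\del\delbar$ property rests on a false premise: the $X_b$ in this construction is the \emph{completely solvable} Nakamura manifold, with $\phi(x+\sqrt{-1}y)=\diag(\esp^x,\esp^{-x})$ depending only on $\operatorname{Re}(z)$. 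The group $G$ is then not a complex Lie group, and $X_b$ is \emph{not} holomorphically parallelizable (that would be the variant with $\phi(z)=\diag(\esp^z,\esp^{-z})$, which is not completely solvable); accordingly your frame $\phi^2=\esp^{-z}dw_1$ is not the left-invariant one here ($\esp^{-x}dw_1$ is, and it is not holomorphic). So the theorem about holomorphically parallelizable solvmanifolds recalled in the introduction does not apply, and the page-$1$-$\del\delbar$ property of the full $C_b$ in cases \ref{item:nakamura-1} and \ref{item:nakamura-2} cannot be quoted --- it must be read off from an actual decomposition of $C_b$, or of $C_b^{\sigma=\id}$ directly as the paper does, into indecomposables. (The reduction from $C_b$ to $C_b^{\sigma=\id}$ via Lemma \ref{lem: invariants} is fine once that is in place, as is your case \ref{item:nakamura-3} argument, provided you cite the known fact that the completely solvable Nakamura manifold satisfies the $\del\delbar$-lemma in that case rather than deriving it from parallelizability.)

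Second, the substantive content of the statement in cases \ref{item:nakamura-1} and \ref{item:nakamura-2} --- that length-$2$ zigzags in bidegrees $(1,1),(2,1)$ and $(1,2),(2,2)$ survive passage to $\sigma$-invariants --- is precisely the step you label ``the main obstacle'' and do not carry out. Since $\sigma$ permutes the exotic characters $\esp^{\pm 2iy}$ while simultaneously exchanging the frame directions $\phi^2\leftrightarrow\pm\phi^3$, it is not automatic that each relevant zigzag of $C_b$ contains an invariant vector; one has to produce it (e.g.\ $\esp^{-2z_1}\de z_{2\bar2}+\esp^{2z_1}\de z_{3\bar3}$) and verify that its $\del$-image is not $\delbar$-exact within the invariant complex. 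Until this is done, the claimed nonvanishing of $d_1$ on $E_1^{1,1}$ and $E_1^{1,2}$, and hence the failure of the $\del\delbar$-property outside case \ref{item:nakamura-3}, remains unproven.
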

\begin{proof}
We list the forms belonging to $C_b^{\sigma=\id}$ in Table \ref{table:c-nakamura-1} in the appendix. In case \ref{item:nakamura-3}, the differential on $C_b^{\sigma=\id}$ is zero, hence it trivially satisfies the $\del\delbar$-property. In cases \ref{item:nakamura-1} and \ref{item:nakamura-2} on the other hand, we have e.g. $d_1[\esp^{-2z_{1}}\de z_{2\bar2}+ \esp^{2z_{1}}\de z_{3\bar3}]\neq 0\in E_1^{2,1}$. In Figure \ref{fig: Cbsigma} we give a schematic picture of the indecomposable summands of $C_b^{\sigma=\id}$.
\end{proof}

\begin{figure}
    \centering
    \includegraphics{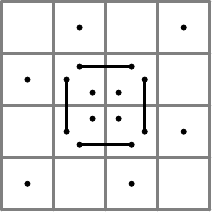}\qquad\qquad\includegraphics{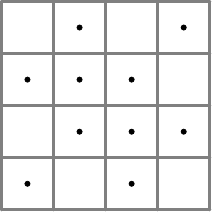}
    \caption{The indecomposable summands in $C_b^{\sigma=\id}$ for $b\in\pi\Z$, resp. $b\not\in\pi\Z$.}
    \label{fig: Cbsigma}
\end{figure}

\begin{prop}
 The complex space $X_{b}/\langle \sigma\rangle$ is simply connected.
\end{prop}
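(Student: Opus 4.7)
The plan is to apply Armstrong's theorem on fundamental groups of orbit spaces: if a group $H$ acts properly discontinuously on a simply connected space $Y$, then $\pi_1(Y/H)\cong H/N$, where $N\triangleleft H$ is the normal subgroup generated by those elements of $H$ that fix at least one point of $Y$. I would apply this with $Y=G$ (simply connected, being a contractible solvable Lie group) and $H=\widetilde{\Gamma}:=\Gamma\rtimes\langle\sigma\rangle$, which makes sense since by construction $\sigma$ is a biholomorphic group automorphism of $G$ with $\sigma(\Gamma)=\Gamma$, and the quotient $\widetilde{\Gamma}\backslash G$ coincides with $X_b/\langle\sigma\rangle$. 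So the claim reduces to showing $N=\widetilde{\Gamma}$, i.e.\ that a generating set of $\widetilde{\Gamma}$ lies in $N$. A convenient generating set consists of $\sigma$, a $\Z$-basis $\{a, b\sqrt{-1}\}$ of $\Lambda$, and a $\Z[\sqrt{-1}]$-basis of $\Delta$ (all embedded in $\widetilde\Gamma$).

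The first step is immediate: $\sigma$ is a Lie group automorphism, hence fixes $e\in G$, so $\sigma\in N$ and a fortiori $\sigma^2\in N$. Next, for any translation generator $\delta=(0,\delta_2,\delta_3)\in\Delta$, the product $\delta\cdot\sigma^2\in\widetilde\Gamma$ acts on coordinates as $(z_1,z_2,z_3)\mapsto(z_1,\delta_2-z_2,\delta_3-z_3)$ (using that $\sigma^2$ is the diagonal automorphism $(1,-1,-1)$ and that $\phi(0)=\mathrm{id}$), so its fixed locus is the nonempty affine line $\{(z_1,\delta_2/2,\delta_3/2)\}$; hence $\delta\cdot\sigma^2\in N$ and thus $\delta\in N$. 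Finally, for either generator $\lambda\in\{a,b\sqrt{-1}\}$ of $\Lambda$, the element $\lambda\cdot\sigma\in\widetilde\Gamma$ acts as $(z_1,z_2,z_3)\mapsto(\lambda-z_1,\,e^{\mathrm{Re}\,\lambda}z_3,\,-e^{-\mathrm{Re}\,\lambda}z_2)$; the fixed-point equations force $z_1=\lambda/2$, while in the $(z_2,z_3)$-plane they reduce to $z_3=-e^{-\mathrm{Re}\,\lambda}\cdot e^{\mathrm{Re}\,\lambda}z_3=-z_3$, so $z_2=z_3=0$. Thus $(\lambda/2,0,0)$ is a fixed point, $\lambda\cdot\sigma\in N$, and combined with $\sigma\in N$ we conclude $\lambda\in N$.

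Putting these three observations together, every generator of $\widetilde\Gamma$ lies in $N$, so $N=\widetilde\Gamma$ and $\pi_1(X_b/\langle\sigma\rangle)=1$. The only real subtlety is matching the matrix description of $\sigma$ with the semidirect-product multiplication on $G$ so that the fixed-point equations are correct (in particular, ensuring that $\sigma$ is a genuine group automorphism of $G$ for the $\phi$ given in the paper and that left translation by elements of $\Gamma$ followed by $\sigma^k$ agrees with the chosen action of $\widetilde\Gamma$); this is a routine check once conventions are fixed. A secondary point worth remarking is that Armstrong's theorem is stated for topological quotients by properly discontinuous group actions, which is exactly our situation, so no further regularity of the (a priori singular) quotient $X_b/\langle\sigma\rangle$ is needed to extract $\pi_1$.
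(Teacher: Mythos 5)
Your proof is correct, but it takes a genuinely different route from the paper. The paper works downstairs on $X_b$: it exploits the fibration $\Delta\backslash\C^2\to X_b\to\Lambda\backslash\C$ together with its section, quotients in two stages (first by $\sigma^2$, which acts fibrewise as $-1$ and yields a bundle with simply connected fibre, so $\pi_1(X_b/\langle\sigma^2\rangle)\cong\Lambda$; then by the residual involution, which acts as $-1$ on the base torus), and concludes by the surjectivity of $\pi_1$ under finite quotient maps (Bredon, Cor.~6.3). You instead work upstairs on the contractible group $G$ and invoke Armstrong's theorem in its sharp form, $\pi_1(G/\widetilde\Gamma)\cong\widetilde\Gamma/N$ with $N$ the normal subgroup generated by elements having fixed points, reducing everything to three explicit fixed-point computations in the semidirect product; I checked that $\sigma$ is indeed a group automorphism of $G=\C\ltimes_\phi\C^2$ (the intertwining relation $S\phi(z_1)=\phi(-z_1)S$ holds), that your formulas for the actions of $\delta\cdot\sigma^2$ and $\lambda\cdot\sigma$ are consistent with the semidirect-product multiplication, and that the argument in fact shows \emph{every} element of $\Delta$ and of $\Lambda$ lies in $N$, so the mild imprecision about a ``$\Z[i]$-basis'' of $\Delta$ (which is rank $4$ over $\Z$) is harmless. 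Your approach buys a one-step, purely algebraic verification that needs no analysis of the bundle structure, at the price of using the stronger Armstrong statement (exact computation of $\pi_1$ rather than mere surjectivity); the paper's two-stage argument is more geometric and has the side benefit of running parallel to the subsequent explicit resolution, which also treats $\sigma^2$ before $\sigma$.
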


\begin{proof}
We consider the fiber bundle $\Delta \backslash \C^{2}\to X_{b}\to \Lambda  \backslash \C$.
By the splitting $G=\C\ltimes _{\phi}\C^{2}$, we have the section  $\Lambda  \backslash \C\to X_{b}$.
The $\sigma^2$-action is compatible with this bundle structure.
 The action on the fiber is the standard involution $-1$ on $\Delta \backslash \C^{2}$ and  the one on the base is trivial.
 Thus $X_{b}/\langle\sigma^2 \rangle$ is a fiber bundle over a torus $\Lambda  \backslash \C$ with the simply connected fiber $\Delta \backslash \C^{2}/(-1)$.
 We have $\pi_{1}(X_{b}/\langle \sigma^2\rangle)\cong \pi_{1}(\Lambda  \backslash \C)=\Lambda$.
Consider 
$X_{b}/\langle \sigma\rangle=(X_{b}/\langle \sigma^2\rangle)/\langle [\sigma]\rangle$.
The action of $\langle [\sigma]\rangle$ is compatible  with the  bundle structure on $X_{b}/\langle \sigma^2\rangle$.
The action on the base is  the standard involution $-1$ on $\Lambda  \backslash \C$.
Since $\Lambda  \backslash \C/ (-1)$ is simply connected,
the quotient $X_{b}/\langle \sigma^2\rangle\to X_{b}/\langle \sigma\rangle$ sends $\pi_{1}(X_{b}/\langle \sigma^2\rangle)\cong\pi_{1}(\Lambda  \backslash \C)$ to the trivial group.
Thus, by \cite[Corollary 6.3]{Bre},  $\pi_{1}(X_{b}/\langle \sigma\rangle)$ is trivial.
\end{proof}

We now take $M_b$ to be resolutions as in Theorem \ref{thmintro: resolution} of the spaces $X_b/\langle\sigma\rangle$. Since this modifies the bigraded quasi-isomorphism type of $A_{X_b/\sigma}\simeq C^{\sigma=\id}_b$ only by bicomplexes of points and curves, all of which satisfy the $\partial\bar\partial$-Lemma, we conclude. For completeness, we give a more explicit description of the geometry of $X_b$ and the resolution procedure below:

\subsubsection{Geometry of $X_b$}
\begin{lem}\label{lem: double mapping torus}($X_b$ as a double mapping torus)
The map $\id\times (P\cdot\_)$ induces an identification 
\[
X_b\cong \C\times_{\Z^2}(\Z[i]^2\backslash\C^2),
\]
where the basis element $e_1=(1,0)$ of $\Z^2$ acts by $+a$ on $\C$ and by $A\cdot\_$ on the right and the basis element $e_2=(0,1)$ acts by $+ib$ on $\C$ and by the identity on the right. This identification is compatible with the fibre bundle projections to $\Lambda\backslash\C$ on both sides. The action of $\sigma$ on the left is identified with the action of the matrix $\sigma^3=\sigma^{-1}$ on the right. 
\end{lem}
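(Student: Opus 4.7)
The plan is to view $X_b$ as the left-quotient $\Gamma\backslash G$ of $G=\C\ltimes_\phi \C^2$ by the lattice $\Gamma=\Lambda\ltimes_\phi\Delta$, and to show that the fiberwise rescaling $\Phi\colon\C\times\C^2\to\C\times\C^2$ defined by $\Phi(z,w)=(z,Pw)$ descends to the claimed identification. The first observation is that $\Phi$ sends the fiberwise lattice $\Delta=P^{-1}\Z[i]^2$ to $\Z[i]^2$, so it induces a diffeomorphism $\C\times(\Delta\backslash\C^2)\xrightarrow{\sim}\C\times(\Z[i]^2\backslash\C^2)$ in the fiber coordinates.

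Next I would track how each generator of $\Gamma$ acts after conjugation by $\Phi$. Using the semidirect product multiplication $(\lambda,\delta)\cdot(z,w)=(\lambda+z,\delta+\phi(\lambda)w)$, the diagonalization identity $P\phi(a)=AP$, and $\phi(ib)=I$, one checks that the generator $(a,0)$ becomes $(z,w')\mapsto(z+a,Aw')$ on the right, the generator $(ib,0)$ becomes $(z,w')\mapsto(z+ib,w')$, and any $(0,\delta)$ becomes translation in the fiber by $P\delta\in\Z[i]^2$. Since $A\in SL_2(\Z)$ preserves $\Z[i]^2$, the $\Z^2$-action generated by $e_1$ and $e_2$ on $\C\times(\Z[i]^2\backslash\C^2)$ is well-defined, and the fiberwise $\Delta$-relations from $\Gamma$ are exactly the $\Z[i]^2$-relations already imposed in the fiber. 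Reading the resulting double coset description as a mapping torus in the base direction yields the identification in the statement. Compatibility with the projections to $\Lambda\backslash\C$ is then automatic, since both are induced by $(z,w)\mapsto z$ and $\Phi$ is the identity on the first factor.

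For the $\sigma$-equivariance, I would compute directly: writing $J=\left(\begin{smallmatrix}0&1\\-1&0\end{smallmatrix}\right)$ so that $\sigma(z,w)=(-z,Jw)$, one has $\Phi\sigma\Phi^{-1}(z,w')=(-z,PJP^{-1}w')$. The given anticommutation $JP=-PJ$ gives $PJP^{-1}=-J=J^3$, hence $\Phi\sigma\Phi^{-1}$ acts by $(z,w')\mapsto(-z,J^3w')$, which is the formula for $\sigma^3=\sigma^{-1}$ on the right hand model.

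The step requiring the most care is the orbit matching in the middle paragraph: one must verify that the fiberwise $\Z[i]^2$-translations produced by the $\delta$-component of $\Gamma$ combine correctly with the $\Lambda$-twists $A^k$ to reproduce precisely the full $\Gamma$-orbits under $\Phi$. Once the diagonalization $P^{-1}AP=\phi(a)$ and the preservation of $\Z[i]^2$ by $A$ are in hand, this becomes a routine (if slightly tedious) semidirect product computation, and no further geometric input is needed.
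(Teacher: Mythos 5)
Your argument is correct and is exactly the intended one: the paper's proof is just ``Follows from the definitions,'' and you have carried out that unwinding, conjugating the generators of $\Gamma=\Lambda\ltimes_\phi\Delta$ by $\Phi(z,w)=(z,Pw)$ and using $P\phi(a)P^{-1}=A$, $\phi(ib)=I$, $P\Delta=\Z[i]^2$, and $PJP^{-1}=-J=J^3$. All the individual computations check out against the identities stated in the paper, so there is nothing to add.
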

\begin{proof}
Follows from the definitions.
\end{proof}

\begin{lem}[description of the fixed point set]\label{lem: fixed points}\,
\begin{enumerate}
    \item The action of $\sigma^2$ on $X_b$ fixes exactly six curves $C_0,...,C_5\subseteq X_b$. Under the projection $p:X_b\to \Lambda\backslash\C$, the curve $C_0$ maps isomorphically to the base torus and $C_1$,...,$C_5$ are $3$--sheeted connected coverings. In particular, the fixed point set intersects the fibre of $p$ over each point in sixteen points. In appropriate local coordinates, the action around each fixed point looks like that of the diagonal matrix $\Delta(1,-1,-1)$ on $\C^3$.
    \item The action of $\sigma$ interchanges $C_1$ and $C_2$ and $C_4$ and $C_5$. It restricts to an action on $C_0$ and $C_1$. The fixed point set consists of sixteen points, given by the intersection of $C_0\cup C_3$ with the fibres over $P_0:=[0]$, $P_1:=[a/2]$, $P_2:=[ib/2]$ and $P_3:=[a/2+ib/2]$. In appropriate local coordinates, the action around each fixed point looks like that of the diagonal matrix $\Delta(-1,i,-i)$ on $\C^3$.
\end{enumerate} 
\end{lem}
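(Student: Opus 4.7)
The plan is to work in the double mapping torus description from Lemma~\ref{lem: double mapping torus}, in which the $\sigma$-action takes a simple linear form, and to reduce both parts to combinatorial analysis on the $2$-torsion set in each fibre. First I translate the $\sigma$-action to the $(z_1,w)$ coordinates. From the identity $RP=-PR$ recorded above, one deduces $PRP^{-1}=-R$; combined with $P\phi(a)P^{-1}=A$, this gives $\sigma\colon [z_1,w_1,w_2]\mapsto[-z_1,-w_2,w_1]$ and $\sigma^2\colon[z_1,w_1,w_2]\mapsto[z_1,-w_1,-w_2]$.

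For part (i), $[z_1,w]$ is fixed by $\sigma^2$ iff $2w\in\Z[i]^2$, yielding $16$ points in each fibre of the projection $p\colon X_b\to\Lambda\backslash\C$, namely $\tfrac{1}{2}\Z[i]^2/\Z[i]^2\cong(\Z[i]/2\Z[i])^2$. The connected components of the fixed set correspond to orbits of the monodromy, which is generated by the action of $A$ on $(\Z[i]/2\Z[i])^2$. A direct check shows $A\bmod 2$ satisfies $A^3=I$ and has no nonzero fixed vector, so its orbits on the $16$-element set are the fixed point $(0,0)$---producing $C_0$ as the zero section, isomorphic to $\Lambda\backslash\C$---together with five orbits of size $3$, producing connected $3$-sheeted coverings $C_1,\dots,C_5$. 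The local model $\mathrm{diag}(1,-1,-1)$ is immediate because $\sigma^2$ is linear in these coordinates, acting by the identity on $z_1$ and by $-I$ on $w$.

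For part (ii), I analyze the induced $\sigma$-action on the six curves. The key simplification is that $-n\equiv n\pmod{2\Z[i]}$ for every $n\in\Z[i]$, so $(w_1,w_2)\mapsto(-w_2,w_1)$ reduces modulo $\Z[i]^2$ to the coordinate swap $(n_1,n_2)\mapsto(n_2,n_1)$ on $(\Z[i]/2\Z[i])^2$. Enumerating the size-$3$ orbits of $A$ and computing which are swap-stable and which are exchanged in pairs identifies the $\sigma$-stable curves and the swapped pairs among $C_1,\dots,C_5$. On each $\sigma$-stable cover $C_i\cong\C/\Lambda_i'$ with $\Lambda_i'\subseteq\Lambda$ of appropriate index, $\sigma$ restricts to an involution lifting $z_1\mapsto-z_1$; an explicit calculation in the cover coordinates (solving $2\tilde z\equiv c_i\pmod{\Lambda_i'}$ for an appropriate translation $c_i$) shows it has exactly $4$ fixed points, one above each of the $2$-torsion points $P_0,\dots,P_3$ of the base. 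Summing contributions yields the required $16$ fixed points. Finally, the linearization of $\sigma$ at such a fixed point is $\mathrm{diag}(-1,-R)$; since $-R$ has eigenvalues $\pm i$, this takes the asserted form $\mathrm{diag}(-1,i,-i)$ in a suitable holomorphic basis.

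The main technical point is the orbit bookkeeping in part (ii)---explicitly listing the size-$3$ orbits of $A\bmod 2$, tracking the swap on them, and for each stable $3$-sheeted cover carrying out the fixed-point count for the lifted involution including the correct translation $c_i$. These are finite calculations, but they are what pins down the specific pairing of $C_1,\dots,C_5$ and the distribution of the $16$ fixed points over the four points $P_j$.
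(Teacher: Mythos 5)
Your strategy is the same as the paper's: pass to the double mapping torus of Lemma \ref{lem: double mapping torus}, identify the $\sigma^2$-fixed set fibrewise with the $2$-torsion, and let the monodromy $A$ group the sixteen sections into curves; for $\sigma$, use equivariance over the base to confine fixed points to the fibres over $P_0,\dots,P_3$. Part (i) of your argument is complete and correct, and your observation that $A\bmod 2$ has order $3$ with no nonzero fixed vector is a cleaner route to ``one $1$-orbit plus five $3$-orbits'' than the paper's explicit enumeration.

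For part (ii), however, you defer exactly the computation that carries the content of the statement, and you should be aware that carrying it out does not reproduce the lemma as printed. Your reduction of the fibrewise $\sigma$-action on the $2$-torsion to the coordinate swap $(n_1,n_2)\mapsto(n_2,n_1)$ is correct, but the swap preserves each of the orbits underlying $C_1$, $C_2$ and $C_3$ individually and exchanges only $C_4\leftrightarrow C_5$; it does not interchange $C_1$ and $C_2$. (The printed statement is already internally inconsistent here, since it asserts both that $\sigma$ interchanges $C_1$ with $C_2$ and that it restricts to an action on $C_1$.) Completing the fixed-point count --- e.g.\ over $P_0$ the condition is $(-w_2,w_1)\equiv(w_1,w_2) \pmod{\Z[i]^2}$, giving the four diagonal $2$-torsion points $(t,t)$, and over $P_1$ the condition $A(-R)w\equiv w$ gives the four points $(0,t)$ --- shows that the sixteen fixed points are distributed as one point on each of $C_0,C_1,C_2,C_3$ over each $P_j$, rather than as $(C_0\cup C_3)\cap p^{-1}(\{P_0,\dots,P_3\})$. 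The total count of sixteen, the confinement to the fibres over the $P_j$, and the local model $\Delta(-1,i,-i)$ (the eigenvalues of $\diag(-1,-R)$) all come out as claimed, and these are what the later resolution computation actually uses; but the ``orbit bookkeeping'' you postpone is not a formality --- it is precisely where the statement needs correcting, so you must actually write it down.
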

\begin{proof}
We will identify $X_b\to\Lambda\backslash\Lambda$ with a double mapping torus as in Lemma \ref{lem: double mapping torus}. The action of $\sigma^2=\Delta(1,-1,-1)$ on the base is trivial, so to compute the fixed points, we first consider its action in each fibre. There it is the multiplication by $(-1)$ on a standard torus $(\Z[i])^2\backslash\C^2$ and the $16$ fixed points are given by the set
\[
F:=\left\{P=(x,y)\mid x,y\in \left\{0,\frac 1 2,\frac i 2,\frac{1+i}{2}\right\}\right\}.
\]
If the monodromy $A$ were trivial, we would thus have $16$ fixed curves of $\sigma^2$, each mapping isomorphically to the base. However, it is not trivial and so we have to compute its action on $F$. Doing so, we find for example that it acts trivially on $(0,0)$ but has order three on other points, e.g. 
\[\left(\frac 1 2,0\right)\mapsto \left(1,\frac 12\right)\sim \left(0,\frac 12\right)\mapsto \left(\frac 12,\frac 12\right)\mapsto \left(\frac 3 2, 1\right)\sim\left(\frac 12,0\right)\]
etc. In summary, we obtain, slightly abusing notation:
\begin{align*}
C_0&:=\Lambda\backslash\C\times \{(0,0)\}\\
C_1&:=\Lambda\backslash\C\times \left\{\left(\frac 12,0\right),\left(0,\frac 12\right),\left(\frac 12,\frac 12\right)\right\}\\
C_2&:=\Lambda\backslash\C\times \left\{\left(\frac i2,0\right),\left(0,\frac i2\right),\left(\frac i2,\frac i2\right)\right\}\\
C_3&:=\Lambda\backslash\C\times\left\{\left(\frac {1+i} 2,0\right), \left(0,\frac{1+i} 2\right), \left(\frac {1+i} 2,\frac{1+i} 2\right)\right\}\\
C_4&:=\Lambda\backslash\C\times\left\{\left(\frac 12,\frac i2\right),\left(\frac i2,\frac{i+1}2\right), \left(\frac {i+1}2, \frac 12\right)\right\}\\
C_5&:=\Lambda\backslash\C\times\left\{\left(\frac i2,\frac 12\right), \left(\frac 12,\frac{i+1}2\right),\left(\frac{i+2}2,\frac i2\right)\right\}
\end{align*}
We note that the projection to $\Lambda\backslash B$ is equivariant with respect to $\sigma$ and multiplication by $(-1)$. The latter has fixed points $P_0,...,P_3$. All fixed points of $\sigma$ on $X_b$ therefore have to lie in the fibres over the $P_i$. The remaining assertions are straightforward to check.
\end{proof}

\subsubsection{Explicit description of the resolution} 
  We will do this in two steps, first resolving $X_b/\langle \sigma^2\rangle$. For this, we consider the blow-up in all $C_i$ as in Lemma \ref{lem: fixed points}:
\[
X'_b:=Bl_{C_0\cup...\cup C_5}X_b\longrightarrow X_b
\]
This has an induced action of $\sigma^2$ and because it is in every fibre the Kummer-construction of a $K3$-surface, we obtain that $X'_b/\langle \sigma^2\rangle\to \Lambda\backslash\C$ is a locally trivial holomorphic $K^3$-bundle (in particular, it is a manifold).

Let us look more closely at the local structure of $X'_b/\langle\sigma\rangle$: around any point $P\in C_i$ we can find a $\sigma^2$-invariant open neighborhood around $P$ and a chart $\varphi$ to some neighborhood $U$ of $0$ in $C^3$ with coordinates $z_1,z_2,z_3$ such that $\varphi(p)=0$, $C_i\cong\{z_2=z_3=0\}\cap U$ and the action of $\sigma^2$ corresponds to the diagonal action $\Delta(1,-1,-1)$. Hence, a local model for $X'_b$ is given by:
\[
\{((z_1,z_2,z_3),[w_2:w_3])\in U\times \mathbb{P}^2\mid z_2w_3=z_3w_2\}
\]
with $\sigma$-action
\[
((z_1,z_2,z_3),[w_2:w_3])\longmapsto((z_1,-z_2,-z_3),[-w_2:-w_3]).
\]
Thus, in the standard chart $U_2:=\{w_2\neq 0\}$, with coordinates $(t_1, t_2, t_3)$ for $t_1=z_1,t_2=\frac{w_3}{w_2}, t_3=z_3$, $\sigma^2$ acts as
\[
(t_1,t_2,t_3)\mapsto (t_1,t_2,-t_2)
\]
So, we see explicitly that the quotient by $\sigma^2$ is smooth with local coordinates on $U_2/\sigma^2$ given by $(t_1,t_2,t_3^2)$. A similar discussion applies to the other chart $U_3=\{w_3\neq 0\}$.

With this at hand, we now turn to the induced action of $\sigma$ on $X'_b/\sigma^2$. Denote by $E_i$ the exceptional divisors corresponding to $C_i$. Since $\sigma$ interchanges $C_1$ and $C_2$ (resp. $C_4$ and $C_5$) the same holds for $E_1$ and $E_2$ (resp. $E_4$ and $E_5$). Thus, the fixed points can only lie on $E_1$ and $E_4$. Let us thus assume that in the construction of the previous paragraph $P\in C_1\cup C_3$ and $\varphi$ is defined on a $\sigma$ invariant open and identified the action of $\sigma$ with the diagonal action $\Delta(-1,i,-i)$. Then, a local model for the $\sigma$-action on $X'_b$ is:
\[
((z_1,z_2,z_3),[w_2:w_3])\longmapsto ((-z_1,iz_2,-iz_3,[iw_2:-iw_3])
\]
Thus the induced action on $X'_b/\sigma^2$ in the coordinates $z_1,t_2,t_3^2$ looks as follows:
\[
(z_1,t_2,t_3^2)\longmapsto (-z_1,-t_2,-t_3^2)
\]
(and similarly in the coordinates corresponding to $U_3$). Over every point $P_1,...,P_4\in \Lambda\backslash\C$, there thus lie four fixed points of $\sigma$, two on each of $E_1$ and $E_3$, with local action given by multiplication by $(-1)$.

If one blows up such a point with local action $(-1)$, a local model for the blow-up is given by
\[
\{((x_1,x_2,x_3),[y_1:y_2:y_3]\mid x_1y_2=x_2 y_1, x_1y_3=x_3y_1, x_2y_3=x_3y_2\}
\]
and the induced action in the coordinates on $U_1=\{y_1\neq 0\}$ given by $s_1=x_1,s_2=\frac{y_2}{y_1},s_3=\frac{y_3}{y_1}$ is given by
\[
(s_1,s_2,s_3)\longmapsto (-s_1,s_2,s_3).
\]
Hence, again the quotient is smooth (with coordinates $(s_1^2,s_2,s_3)$). More globally, if we denote by $Q_1,...,Q_{16}$ the fixed points of $\sigma$ on $X'_b/\sigma^2$ and set $\hat{X}_b:=Bl_{Q_1\cup...\cup Q_{16}}X'_b/\sigma^2\to X'_b/\sigma^2$, 
then the induced map
\[
\hat{X}_b/\sigma\longrightarrow X'_b/\sigma
\]
is a resolution of singularities. Let us summarize the situation in a diagram:
\[
\begin{tikzcd}
                        &X'_b\ar[r]\ar[d]    &X_b\ar[d]\\
    \hat{X}_b\ar[r]\ar[d]     &X'_b/\sigma^2\ar[r]\ar[d]          &X_b/\sigma^2\ar[d]\\
    \hat{X}_b/\sigma\ar[r]&X'_b/\sigma\ar[r]& X_b/\sigma~.
\end{tikzcd}
\]
Each leftmost horizontal map (resp. each composition of horizontal maps) is a resolution of singularities. Let us now use this to compute the bigraded quasi-isomorphism type of the double complex of forms on the manifold $\hat{X}_b/\sigma$:
\begin{align*}
    A_{\hat{X}_b/\sigma}&\simeq A_{\hat{X}_b}^{\sigma=\id}\\
                        &\simeq \left( A_{X'_b/\sigma^2}\oplus \bigoplus_{i=1}^{16}(A_{Q_i}[1,1]\oplus A_{Q_i}[2,2])\right)^{\sigma=\id}\\
                        &\simeq \left(A_{X'_b}^{\sigma^2=\id}\oplus \bigoplus_{i=1}^{16}(A_{Q_i}[1,1]\oplus A_{Q_i}[2,2])\right)^{\sigma=\id}\\
                        &\simeq \left( \left(A_{X_b}\oplus\bigoplus_{i=0}^{5}A_{C_i}[1,1]\right)^{\sigma^2=\id}\oplus \bigoplus_{i=1}^{16}(A_{Q_i}[1,1]\oplus A_{Q_i}[2,2])\right)^{\sigma=\id}\\
                        &\simeq C_b^{\sigma=\id}\oplus \left(\bigoplus_{i=0}^5 A_{C_i}[1,1]\right)^{\sigma=\id}\oplus\bigoplus_{i=1}^{16}\left(A_{Q_i}[1,1]\oplus A_{Q_i}[2,2]\right)
\end{align*}

Since points and curves satisfy the $\del\delbar$-property, we see that the manifolds $M_b:=\hat{X}_b/\sigma$ satisfy the statement in Construction \ref{constr: p1ddbar}.

\begin{rem}
Recall that a reduced  compact complex space is  in Fujiki's class $\mathcal C$ if it  admits  a compact K\"ahler modification.
By Hironaka's Flatting theorem, this condition is equivalent to being  a meromorphic image of  a compact K\"ahler space (see \cite[Theorem 5]{Var}).
The latter condition is the original definition  of Fujiki in \cite{Fuj}.

We notice that for any $b$,  $M_b:=\hat{X}_b/\sigma$ is not in Fujiki's class $\mathcal C$.
If $M_b:=\hat{X}_b/\sigma$ is in Fujiki's class $\mathcal C$, then   $X_{b}/\langle \sigma\rangle $ is a compact complex space in Fujiki's class $\mathcal C$  by the resolution  $M_b\to X_{b}/\langle \sigma\rangle $ and hence $X_{b}$ is a complex space  in  Fujiki's class $\mathcal C$ by \cite[Lemma 4.6]{Fuj} since the quotient map $X_{b}\to X_{b}/\langle \sigma\rangle $ is a K\"ahler morphism. This contradicts \cite[Theorem 3.3]{AK14} and \cite[Main Theorem]{Has}.

In \cite{Fri}, Friedman constructs compact complex three dimensional  $\partial\bar\partial$-manifolds with second Betti numbers$=0$ and   trivial canonical bundles by using generalized Clemens construction.
On our example $M_b$, for any $b$, we have $H^2_{dR}(M_b)\not=0$ and  $H^0(M_b, \Omega^3)=0$ (see Appendix B) in particular the canonical bundle of $M_b$ is non-trivial.

\end{rem}

\subsection{Construction \ref{constr: ddc+3}}
We proceed in a similar fashion to the previous section, constructing a simply connected, singular quotient of a complex $4$-fold by a finite group with invariant forms having the $dd^c+3$ property, then taking a resolution as in Theorem \ref{thmintro: resolution}. Since all points, curves and surfaces satisfy the $dd^c+3$-condition (c.f. \cite{SW22}), this is sufficient.

Consider the nilpotent Lie group 
\[
H=\left\{M\in Gl_4(\R)\mid M=\begin{pmatrix}
1 & x_1 & y_1 & x_3\\
0 & 1 & 0 & x_2\\
0 & 0 & 1 & y_2\\
0 & 0 & 0 & 1
\end{pmatrix}\right\}
\]
and set $G:=H\times \R^3$, where $\R^3$ has coordinates $(y_3,x_4,y_4)$. Let $H_\Z:=H\cap GL_4(\Z)$ and $\Gamma=H_\Z\times\Z^3\subseteq G$ a (automatically cocompact) lattice in $G$. Define $X:=\Gamma\backslash G$.
One obtains a basis of left-invariant differential forms $X^1,...,X^4,Y^1,...,Y^4$ on $G$ by

\[
Y^i:=dy_i,\qquad X^i:=\begin{cases}dx_i&i\neq 3\\-2dx_3+2x_1dx_2+2y_1dy_2&i=3\end{cases},
\]
so that $dX^3=2(X^1X^2+Y^1Y^2)$ and $dX^i=dY^i=0$ otherwise. We may define a left-invariant complex structure $J$ on $G$ by $J(X^i)=Y^i$. Putting $\omega^i:= X^i+iY^i$, we thus obtain a complex of $\C$-valued left-invariant forms given by
\[
A:=\Lambda(\omega^1,...,\omega^4,\bar\omega^1,...,\bar\omega^4)\quad\text{s.t.} \quad d\omega^3=d\bar\omega^3=\omega^{1\bar 2}+\omega^{\bar 1 2}, ~d\omega^k=d\bar\omega^k=0\text{ else.}
\]
The complex structure thus defined is nilpotent, and therefore the inclusion $A\subseteq A_X$ is a bigraded quasi-isomorphism \cite{CFGU}.

Define an automorphism of $G$ by 
\[
\sigma(x_1,y_1,x_2,y_2,x_3,y_3,x_4,y_4)=(-y_1,x_1,y_2,-x_2,-x_3,-y_3,-x_4,-y_4).  
\]
On $A$, this induces the action
\[
\sigma(\omega^1)=i\omega_1,~\sigma(\omega^2)=-i\omega^2,~\sigma(\omega^3)=-\omega^3,~\sigma(\omega^4)=-\omega^4.
\]

\begin{prop}
    The cdga $A^{\sigma=\id}$ satisfies the $dd^c+3$ condition, but not the $dd^c$-condition.
\end{prop}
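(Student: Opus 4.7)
The plan is to establish the stronger statement that the ambient cdga $A$ itself already satisfies the $dd^c+3$-condition, then invoke Lemma~\ref{lem: invariants} to transfer this to $A^{\sigma=\id}$; failure of $\partial\bar\partial$ will follow by exhibiting a specific length-$3$ zigzag of $\sigma$-invariant forms.

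\emph{Decomposition of $A$.} I would begin by isolating the ``non-differential'' part: let $A_0 := \Lambda(\omega^1,\omega^2,\omega^4,\bar\omega^1,\bar\omega^2,\bar\omega^4)$, so that $A = A_0 \otimes \Lambda(\omega^3,\bar\omega^3)$ as bigraded vector spaces with $d|_{A_0} = 0$ and the whole differential encoded by $d\omega^3 = d\bar\omega^3 = \eta$, where $\eta := \omega^{1\bar 2} + \omega^{\bar 1 2}$. A short computation gives $\eta^2 = 2\omega^{1\bar 1 2\bar 2}$ and $\eta^3 = 0$, so multiplication by $\eta$ defines a bi-homogeneous nilpotent operator $m_\eta: A_0 \to A_0$ of bidegree $(1,1)$ and index at most $3$. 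Pick a bi-homogeneous $\C[\eta]/(\eta^3)$-module decomposition
\[
A_0 = \bigoplus_i \C[\eta]/(\eta^{k_i}) \cdot \kappa_i,\qquad k_i \in \{1,2,3\}.
\]
By the Leibniz rule, each $M_i := \bigl(\C[\eta]/(\eta^{k_i}) \cdot \kappa_i\bigr) \otimes \Lambda(\omega^3,\bar\omega^3)$ is a sub-bicomplex of $A$, and $A = \bigoplus_i M_i$.

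\emph{Indecomposables of each $M_i$.} I would next run through $k_i \in \{1,2,3\}$. For $k_i = 1$, $M_i$ consists of four dots. For $k_i = 2$, the eight basis elements of $M_i$ split as two dots $\{\kappa_i\}$ and $\{\omega^{3\bar 3}\eta\kappa_i\}$, a reverse-L $\langle \omega^3\kappa_i,\bar\omega^3\kappa_i,\eta\kappa_i\rangle$, and an L $\langle \omega^{3\bar 3}\kappa_i,\omega^3\eta\kappa_i,\bar\omega^3\eta\kappa_i\rangle$. For $k_i = 3$, the twelve basis elements decompose analogously but with an extra square $\langle \omega^{3\bar 3}\kappa_i,\omega^3\eta\kappa_i,\bar\omega^3\eta\kappa_i,\eta^2\kappa_i\rangle$ inserted and the L shifted one $\eta$-step up to $\langle \omega^{3\bar 3}\eta\kappa_i,\omega^3\eta^2\kappa_i,\bar\omega^3\eta^2\kappa_i\rangle$. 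Since every summand is a dot, L, reverse-L, or square, $A$ satisfies $dd^c+3$, and Lemma~\ref{lem: invariants} then yields the same for $A^{\sigma=\id}$.

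\emph{Failure of $\partial\bar\partial$ and main obstacle.} For the failure, I apply the above to $\kappa := \omega^4$: one has $\eta^2\omega^4 \neq 0$, so $\omega^4$ generates a $k_i = 3$ summand whose associated reverse-L is $\langle \omega^3\omega^4,\bar\omega^3\omega^4,\eta\omega^4\rangle$. Since $\omega^3$, $\bar\omega^3$, $\omega^4$ and $\eta$ each have $\sigma$-eigenvalue $-1$, all three of these forms are $\sigma$-invariant; the reverse-L thus descends to a summand of $A^{\sigma=\id}$, a length-$3$ zigzag that rules out $\partial\bar\partial$. The main technical point is the $k_i = 3$ case, where $\omega^{3\bar 3}\kappa_i$ and $\eta\kappa_i$ sit at the same bidegree; assigning them correctly to the ``upper'' square and the ``lower'' reverse-L, respectively, depends on the precise sign in the Leibniz identity $d(\omega^{3\bar 3}\eta^j\kappa_i) = (\bar\omega^3 - \omega^3)\eta^{j+1}\kappa_i$.
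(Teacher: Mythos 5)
Your proof is correct, and it follows the paper's overall strategy (prove the stronger statement that $A$ itself is $dd^c+3$, then transfer to $A^{\sigma=\id}$ via Lemma~\ref{lem: invariants}), but the way you carry out the key computation is genuinely different. The paper either reads off the decomposition from an explicit table/figure, or uses a shortcut: it checks $dd^c+3$ for the three-dimensional factor $A'=\Lambda(\omega^1,\omega^2,\omega^3,\bar\omega^1,\bar\omega^2,\bar\omega^3)$ and then observes that tensoring with the $dd^c$-algebra $\Lambda(\omega^4,\bar\omega^4)$ preserves the $dd^c+3$-condition. You instead factor out the \emph{other} variables, organize $A=A_0\otimes\Lambda(\omega^3,\bar\omega^3)$ via the nilpotent operator $m_\eta$ with $\eta^3=0$, and read off every indecomposable summand from the cyclic $\C[\eta]/(\eta^3)$-module decomposition; this is more systematic and produces the full zigzag decomposition of Figure~\ref{fig: Afix} in one stroke, at the cost of the careful bookkeeping in the $k_i=3$ case that you rightly identify. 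Your treatment of the failure of $dd^c$ also differs: the paper argues cohomologically that a closed mixed-degree form (namely $\omega^{34}-\omega^{\bar 34}$ with the paper's sign conventions) is not cohomologous to a sum of pure-degree closed forms, whereas you exhibit the reverse-L $\langle \omega^3\omega^4,\bar\omega^3\omega^4,\eta\omega^4\rangle$ as an explicit $\sigma$-invariant direct summand, which is cleaner and sharper. The only step you should spell out is why $\omega^4$ may be taken as one of the $\kappa_i$: since $m_\eta$ has bidegree $(1,1)$ and $A_0^{0,-1}=0$, no element of bidegree $(1,0)$ lies in $\operatorname{im}(m_\eta)$, and $\eta^2\omega^4\neq 0$ forces $\omega^4$ to generate a free (hence split) cyclic summand of length $3$; with that remark included, the argument is complete.
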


\begin{proof}
    Since everything is explicitly defined, it is a routine calculation to compute the decomposition of $A$ into indecomposables. We give an explicit vector space basis for $A^{\sigma=\id}$ in Table \ref{table: ddc+3} in the Appendix and the full decomposition in Figure \ref{fig: Afix}. One can also be slightly more efficient as follows: First check that $A':=\Lambda(\omega^1,\omega^2,\omega^3,\omega^{\bar1},\omega^{\bar2},\omega^{\bar3})$ satisfies the $dd^c+3$-condition. Then necessarily $A'\otimes\Lambda(\omega^4,\omega^{\bar 4})$ satisfies the $dd^c+3$-condition since it is a tensor product with a $dd^c$-algebra. By Lemma \ref{lem: invariants}, $A^{\sigma=\id}$ satisfies the $dd^c+3$-condition. It does not satisfy the usual $dd^c$-condition since e.g. the closed form $\omega^{34}+\omega^{\bar 3 4}$ is not cohomologous to a sum closed forms of pure-degree.
\end{proof}

\begin{figure}
    \centering
    \includegraphics{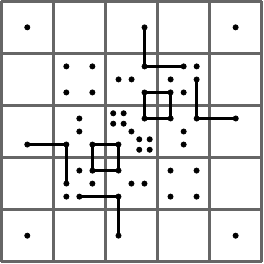}
    \caption{The indecomposable summands of $A^{\sigma=\id}$.}
    \label{fig: Afix}
\end{figure}



\begin{prop}
 The complex space $X/\langle\sigma \rangle$ is simply connected. 
\end{prop}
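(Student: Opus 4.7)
The plan is to apply Armstrong's theorem directly to the universal cover $G\to X/\langle\sigma\rangle$. Since $G=H\times\R^3$ is simply connected and $\tilde\Gamma:=\Gamma\rtimes\langle\sigma\rangle$ acts on it properly discontinuously with quotient $X/\langle\sigma\rangle$, Armstrong identifies $\pi_1(X/\langle\sigma\rangle)\cong\tilde\Gamma/E$, where $E\trianglelefteq\tilde\Gamma$ is the normal subgroup generated by the point-stabilizers. It thus suffices to show $E=\tilde\Gamma$.

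An element $(\alpha,\beta,\sigma^k)\in\tilde\Gamma$, with $\alpha\in H_\Z$ and $\beta\in\Z^3$, stabilizes $(h,v)\in H\times\R^3$ if and only if $\alpha=h\sigma^k(h)^{-1}$ and $\beta=(1-(-1)^k)v$. The identity element $0\in G$ is fixed by $\sigma$, so $\langle\sigma\rangle\subseteq E$. Taking $k=1$, $h=0$, and $v=\tfrac12 e_i$ for $i=1,2,3$, and then multiplying by $\sigma^{-1}\in E$, puts the three standard generators of $\{0\}\times\Z^3\subset\Gamma$ in $E$.

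For the $H_\Z$-factor I would perform a short matrix computation in the unitriangular group $H$, yielding
\[
h\sigma(h)^{-1}=\bigl(x_1+y_1,\;y_1-x_1,\;x_2-y_2,\;x_2+y_2,\;2x_3-(x_1x_2+y_1y_2)-(x_1y_2-y_1x_2)\bigr)
\]
for $h=(x_1,y_1,x_2,y_2,x_3)\in H$. Evaluating this formula at the five half-integer points $h=(\tfrac12,\pm\tfrac12,0,0,0)$, $(0,0,\tfrac12,\mp\tfrac12,0)$, and $(0,0,0,0,\tfrac12)$ recovers (up to sign) exactly the five standard generators of $H_\Z$. Hence $\Gamma\cup\langle\sigma\rangle\subseteq E$, so $E=\tilde\Gamma$ and $\pi_1(X/\langle\sigma\rangle)=1$.

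The main subtlety is producing the central generator $x_3\in H_\Z$, because of the quadratic commutator terms arising from the non-abelian multiplication in $H$. The remedy is the choice $h=(0,0,0,0,\tfrac12)$ on the central axis: both bilinear corrections $x_1x_2+y_1y_2$ and $x_1y_2-y_1x_2$ vanish and only the linear term $2x_3$ survives. The four abelianization generators of $H_\Z$ are extracted by analogous choices of $h$ with only one pair of conjugate coordinates nonzero, which likewise kills the bilinear contributions and leaves a clean multiple of a single generator.
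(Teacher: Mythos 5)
Your proof is correct, and it takes a genuinely different route from the paper's. The paper works downstairs: it invokes Bredon's result that the orbit map $q:X\to X/\langle\sigma\rangle$ is surjective on fundamental groups (the action has a fixed point), and then kills the generators $\gamma_1,\dots,\gamma_8$ of $\Gamma=\pi_1(X)$ in batches by restricting to sub-tori admitting sections (the lattice in the $(x_3,y_3,x_4,y_4)$-directions, then the $2$-tori in the $(x_1,y_1)$- and $(x_2,y_2)$-directions for $\sigma^2$), where the $(-1)$-action makes the generating loops contractible in the quotient. You instead work upstairs on the universal cover $G\cong\R^8$ with the full deck group $\tilde\Gamma=\Gamma\rtimes\langle\sigma\rangle$ and apply Armstrong's theorem, reducing everything to one explicit computation of the twisted coboundary $h\mapsto h\sigma(h)^{-1}$ in $H$. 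I verified your formula (with the group law $(x,y,\dots,x_3)\cdot(x',y',\dots,x_3')$ having central correction $x_1x_2'+y_1y_2'$, one gets exactly the last coordinate $2x_3-x_1x_2-y_1y_2-x_1y_2+y_1x_2$ you state) and the five evaluation points do yield the generators of $H_\Z$ up to sign, with the bilinear terms vanishing as you claim; the choices $k=1$, $h=e$, $v=\tfrac12 e_i$ handle the $\Z^3$-factor, and the fixed point of $\sigma$ at the origin handles $\sigma$ itself, so $E=\tilde\Gamma$. In fact your formula shows slightly more: $h\mapsto h\sigma(h)^{-1}$ is surjective onto $H$, so \emph{every} element of the coset $\Gamma\sigma$ has a fixed point. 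The hypotheses you leave implicit --- that $\sigma$ normalizes $\Gamma$, so that $\tilde\Gamma$ is a discrete group acting properly discontinuously on the simply connected $G$ with quotient $X/\langle\sigma\rangle$ --- are immediate from the definitions, so there is no gap. Your argument is more uniform and self-contained as a single algebraic computation, at the cost of invoking Armstrong's theorem in place of the more elementary covering-space and fibration facts used in the paper.
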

\begin{proof}
Take $(0,0,0,0,0,0,0,0)$ as the base point.
We can regard $\pi_{1}X=\Gamma$.
We have the generators 
$\gamma_{1}=(1,0,0,0,0,0,0,0),\dots ,\gamma_{8}=(0,0,0,0,0,0,0,1)$.
By \cite[Corollary 6.3]{Bre}, for the quotient $q:X\to X /\langle\sigma \rangle$, it is sufficient to show $q_{*}(\gamma_{i})$ is the unit element in $\pi_{1}(X/\langle\sigma \rangle)$.
Since $\gamma_{5},\dots, \gamma_{8}$ form a lattice in $\R^{4}=\{(0,0,0,0,x_3,y_3,x_4,y_4)\}$,
by the standard argument $q_{*}(\gamma_{i})$ is trivial for $i=5,6,7,8$.

We consider the quotient map $q^{'}:X\to X /\langle\sigma^2 \rangle$.
It is sufficient to prove $q^{'}_{*}(\gamma_{i})$ is the unit element in $\pi_{1}(X/\langle\sigma^2 \rangle)$ for $i=1,2,3,4$.
Consider the lattice $\Gamma^{'}$ generated by $\gamma_{1}, \gamma_{2}$ in $\R^{2}=\{(x_{1},y_{1},0,0,0,0,0,0)\}$.
Then the fiber bundle $G/\Gamma\to \T^{'}= \R^{2}/\Gamma^{'}$ has  the section $ \T^{'}\to G/\Gamma$.
$\sigma^2$ commutes with the bundle structure.
By the standard argument, $\pi_{1}(\T^{'}/\langle\sigma^2 \rangle)$ is trivial and we can say that $q^{'}_{*}(\gamma_{i})$ is the unit element for $i=1,2$. Taking care of $\gamma_{3}, \gamma_{4}$ with a similar argument, the proposition follows. \end{proof}

One may now conclude again by applying Theorem \ref{thmintro: resolution} to $X/\langle \sigma\rangle$.

\subsection{Construction \ref{constr: FSSpi1}}\label{sec: FSS}
\subsubsection{Review of Bigalke-Rollenske's examples}

In \cite{BR14}, L. Bigalke and S. Rollenske produce, for any $n$, a complex nilmanifold $X_n=G_n/\Gamma_n$ such that there is a nonzero differential on page $E_n$ of the Frölicher spectral sequence. A basis for the left-invariant $(1,0)$-forms in these examples is given by:
\[
dx_1,...,dx_{n-1},dy_1,...,dy_{n},dz_1,...,dz_{n-1}, \omega_1,...,\omega_n
\]
and the differential is zero except on the $\omega_i$'s, where it is given by (for brevity, we omit all wedge-products)
\begin{align*}
d\omega_1&=-d\bar{y}_1dz_1\\
d\omega_i&=dx_{i-1}dy_{i}+dy_{i-1}d\bar{z}_{i-1}~.
\end{align*}
Let \begin{align*}
\beta_1&:=\bar\omega_1d\bar{z}_2...d\bar{z}_{n-1}\\
\beta_2&:=\omega_2d\bar{z}_2...d\bar{z}_{n-1}\\
\beta_k&:= dx_1...dx_{k-2}\omega_k d\bar{z}_k...d\bar{z}_{n-1}\\
\beta_n&:=dx_1...dx_{n-2}\omega_n~.
\end{align*}
The differential on page $E_n$ then corresponds to the subcomplex $B$ given as follows:
\[
\begin{tikzcd}
	0&&&&\\
	\langle\beta_1\rangle\ar[r]\ar[u]&\langle\del\beta_1\rangle&&&\\
	&\langle\beta_2\rangle\ar[r]\ar[u]&\langle\del\beta_2\rangle&&\\
	&&..&{}&\\
	&&&\langle{\beta_n}\rangle\ar[u]\ar[r]&\langle\del\beta_n\rangle.
\end{tikzcd}
\]
This is indeed a subcomplex as
\begin{align*}
\delbar\beta_1&=0\\
\del\beta_1=-\delbar{\beta}_2&=-dyd\bar{z}_1...d\bar{z}_{n-1}\\
\del\beta_k=-\delbar\beta_{k+1}&=(-1)^{k-2}dx_1...dx_{k-1}dy_kd\bar{z}...d\bar{z}_{n-1}\\
\del\beta_n&=(-1)^{n-2}dx_1...dx_{n-1}dy_n~.
\end{align*}

The proof in \cite{BR14} shows that this subcomplex is even a direct summand in the double complex of all forms. Therefore the Frölicher spectral sequence of $X_n$ contains the Frölicher spectral sequence of $B$ as a direct summand and in $B$ one has $d_n([\beta_1])=[\beta_n]\neq 0$, hence also in $E_n(X)$.

\subsubsection{Definition of a group action}
Define an automorphism of the algebra of left-invariant forms on $G_n$ via $\sigma(dx_k)=idx_k,\sigma(dy_k)=idy_k, \sigma(dz_k)=idz_k$ and $\sigma(\omega_k)=-\omega_k$. This corresponds to an automorphism of the Lie algebra of $G_n$ (since it is compatible with $d$) and hence induces one of $G_n$. Because it respects the rational structure $G_n$ given by the real and imaginary parts of the $dx_k,dy_k,dz_k,\omega_k$, it respects a lattice of $G_n$. We have $\sigma(\beta_k)=-i^{n-2}\beta_k$, so the subcomplex $B$ is contained in the complex of invariant forms whenever $n\equiv 0(4)$. 

\begin{prop}
 The complex space $\sigma\backslash G_n/\Gamma_n$ is simply connected.
\end{prop}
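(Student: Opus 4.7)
My plan follows the template used in the proofs of the corresponding statements for Constructions I and II. Since $G_n$ is a simply connected nilpotent Lie group and $\Gamma_n$ a cocompact lattice, we have $\pi_1(X_n) = \Gamma_n$; and because the identity of $G_n$ is $\sigma$-fixed, its class in $X_n$ provides a $\sigma$-fixed basepoint. By Bredon's Corollary 6.3 \cite[Cor.~6.3]{Bre}, it suffices to show that each element of a generating set of $\Gamma_n$ maps to the identity under $q_\ast : \pi_1(X_n) \to \pi_1(X_n/\langle\sigma\rangle)$. I would take as generators the Malcev basis lattice vectors $\gamma_\zeta, \gamma_{\tilde\zeta}$ associated to the real and imaginary parts of the $(1,0)$-coordinates
\[\zeta \in \{x_1,\dots,x_{n-1},\,y_1,\dots,y_n,\,z_1,\dots,z_{n-1},\,\omega_1,\dots,\omega_n\}.\]

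For each $\zeta$ the plan is to exhibit an embedded, $\sigma$-invariant complex sub-torus $T_\zeta \cong \C/\Z[i] \hookrightarrow X_n$ whose fundamental group contains $\gamma_\zeta$ and $\gamma_{\tilde\zeta}$, and such that $T_\zeta/\langle\sigma\rangle$ is simply connected. The commutative square
\[
\begin{tikzcd}
\pi_1(T_\zeta) \ar[r] \ar[d] & \pi_1(X_n) \ar[d,"q_\ast"]\\
\pi_1(T_\zeta/\langle\sigma\rangle) \ar[r] & \pi_1(X_n/\langle\sigma\rangle)
\end{tikzcd}
\]
will then force the two generators to be killed in the target. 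To build $T_\zeta$, I would verify from the explicit structure equations that the real $2$-plane spanned by $\partial/\partial\zeta$ and $\partial/\partial\bar\zeta$ is an abelian subalgebra of $\g_n$: for $\zeta = \omega_k$ this is automatic, the $\omega$-directions being central (the commutator subalgebra of $\g_n$ lies in their span and no $d\omega_j$ involves an $\omega$-form as a factor); for $\zeta \in \{x_k,y_k,z_k\}$ it follows from the fact that no term of any $d\omega_j$ is of the form $d\zeta \wedge d\bar\zeta$, so by Maurer--Cartan $\omega_j([\partial/\partial\zeta,\partial/\partial\bar\zeta]) = 0 = \bar\omega_j([\partial/\partial\zeta,\partial/\partial\bar\zeta])$, forcing $[\partial/\partial\zeta,\partial/\partial\bar\zeta]=0$. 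Exponentiating this subalgebra and using rationality of $\Gamma_n$ with respect to the Malcev basis yields $T_\zeta$.

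By construction, the induced action of $\sigma$ on $T_\zeta$ is multiplication by $i$ when $\zeta \in \{x_k,y_k,z_k\}$ and by $-1$ when $\zeta = \omega_k$. In both cases, a direct Riemann--Hurwitz count (two orbifold points of order $4$ and one of order $2$ in the $i$-case; four orbifold points of order $2$ in the $-1$-case) gives $\chi(T_\zeta/\langle\sigma\rangle) = 2$, so $T_\zeta/\langle\sigma\rangle$ is a topological $2$-sphere and in particular simply connected. Applying Bredon's criterion with this input yields $\pi_1(X_n/\langle\sigma\rangle) = 1$.

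The main obstacle I anticipate is the abelianness verification for the non-central $x_k, y_k, z_k$ directions, combined with the check that $\exp(\mathfrak{h}_\zeta) \cap \Gamma_n$ really is a full-rank lattice (so that $T_\zeta$ is compact and embedded); these reduce to a careful but routine inspection of the explicit structure equations and Malcev coordinates. An alternative would be to base the argument on the fibration $T_\omega \to X_n \to T_{\mathrm{ab}}$ coming from the central $\omega$-subtorus, in the spirit of the second half of the Construction~II proof, but as the bundle is in general nontrivial (lacking a global section) the direct subtorus approach above seems cleaner.
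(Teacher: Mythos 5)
Your proposal is correct and follows essentially the same strategy as the paper: reduce via Bredon's Corollary 6.3 to showing that a generating set of $\Gamma_n=\pi_1(X_n)$ dies in $\pi_1(X_n/\langle\sigma\rangle)$, then exhibit $\sigma$-invariant sub-objects through the basepoint that contain these generators and have simply connected quotients. The only (immaterial) difference is in the choice of sub-objects: the paper uses real circles in the $x_i,y_i,z_i$-directions (on which $\sigma^2$ acts by negation, with quotient an interval) together with the fiber torus $\C^n/\Gamma''$ for the $\omega$-directions, whereas you use complex one-dimensional subtori uniformly, at the cost of the (routine) abelianness and rationality checks you flag.
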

\begin{proof}
Consider the coordinate
\[
x_1,...,x_{n-1},y_1,...,y_{n},z_1,...,z_{n-1}, w_1,...,w_n
\]
as \cite{BR14}.
Take $(0\dots,0)$ as the base point.
We can regard $\pi_{1} (G_n/\Gamma_n)=\Gamma_{n}$. By \cite[Corollary 6.3]{Bre}, for the quotient $q:X\to X /\langle\sigma \rangle$, it is sufficient to show $q_{*}(\gamma)$ is the unit element in $\pi_{1}(X/\langle\sigma \rangle)$ for any $\gamma\in \Gamma_n$. We have the fiber bundle  $ G_n/\Gamma_n\to \C^{3n-2}/\Gamma^{\prime}$ with a fiber $\C^{n}/\Gamma^{\prime\prime}$. Arguing on the fiber at $0\in \C^{3n-2}/\Gamma^{\prime}$, we can easily check that $q_{*}(\gamma)$ is the unit element for any $\gamma\in \Gamma^{\prime\prime}$. Consider the subgroup $\Gamma_{x_{i}}\subset \Gamma_{n}$ corresponding to $x_{i}\in \Z$ or $x_{i}\in i\Z$, other parameters$=0$.
Then we have the fiber bundle $ G_n/\Gamma_n\to \R^{1}/\Gamma_{x_{i}}$ with the $0$-section $\R^{1}/\Gamma_{x_{i}}\to G_n/\Gamma_n$.
Since $\sigma^2$ preserve this section and acts on $\R^{1}/\Gamma_{x_{i}}$ as antipodes.
Thus, we can say that 
$q_{*}(\gamma)$ is the unit element for any $\gamma\in \Gamma_{x_{i}}$.
By the same arguments on $y_{i}$, $z_{i}$, we can say that $q_{*}(\gamma)$ is the unit element for any $\gamma\in \Gamma$.\end{proof}

One may now apply again Theorem \ref{thmintro: resolution} to the quotient $\sigma\backslash G_n/\Gamma_n$ (for $n\equiv 0(4)$ and large enough) to conclude. 


\appendix

\section{Counterexamples to Popovici's SKT conjecture}\label{app: SKTvsFSS}
We keep the notations from section \ref{sec: FSS}. We were not able to write down SKT-metrics on Rollenske's examples directly, but let us modify them as follows: Keep all the forms as above and add $\theta_1,...,\theta_{n-1}$ and $\eta_1,...,\eta_{n-1}$ such that
\begin{align*}
d\theta_i=\delbar\theta_i&:=dy_id\bar{y}_i+dz_id\bar{z}_i\\
d\eta_i=\delbar\eta_i&:=dy_{i+1}d\bar{y}_{i+1}+dx_{i}d\bar{x}_{i}
\end{align*}
Since $d$ is zero on all factors on the right hand side, adding these, we still have $d^2=0$. In fact, the corresponding Lie-algebra with left-invariant complex structure is still two-step nilpotent with rational structure constants. In particular the corresponding Lie group $\tilde{G}_n$ admits some lattice $\tilde{\Gamma}_n$ by Malcev's theorem and the Dolbeault cohomology of $\tilde{X}_n:=\tilde{G}_n/\tilde{\Gamma}_n$ can be computed by left-invariant forms.\\

The claim is now that the subcomplex $B$ spanned by the $\beta_i$ and their differentials is still a direct summand (so that one still has a nonzero differential on $E_n$) and that there exists a left-invariant SKT-metric on $\tilde{X}_n$

Define a left-invariant hermitian metric $h$ by its associated form as:

\[
h:=\theta_1\bar\theta_1+\sum_{i=1}^n\omega_i\bar{\omega}_i+dy_id\bar{y}_i + \sum_{i=1}^{n-1}\frac{1}{2}(\eta_i\bar\eta_i+\theta_{i+1}\bar\theta_{i+1}) + dx_id\bar{x}_i+dz_id\bar{z}_i
\]

Using
\begin{align*}
\del\delbar(\omega_1\bar\omega_1)&=dy_1d\bar{y}_1dz_1d\bar{z}_1\\
\del\delbar(\omega_i\bar\omega_i)&=dy_{i-1}d\bar{y}_{i-1}dz_{i-1}d\bar{z}_{i-1}+dx_{i-1}d\bar{x}_{i-1}dy_id\bar{y}_i&(i\geq 2)\\
\del\delbar(\theta_i\bar\theta_i)&=-2dy_id\bar{y}_idz_id\bar{z}_i\\
\del\delbar(\eta_i\bar\eta_i)&=-2dx_{i}d\bar{x}_idy_{i+1}d\bar{y}_{i+1}
\end{align*}
one sees that this metric is SKT.\\

We still have to see that $d_n([\beta_0])\neq 0$ in the Fr\"olicher spectral sequence for $\tilde{X}_n$ or equivalently that $B$ is a direct summand in the larger complex $A$ of all left-invariant forms  on $\tilde{G}_n$. Define a subcomplex $\tilde{C}\subseteq A$ as follows: In every $(p,q)$ with 
\[(p,q)\notin S:=\{(0,n),(1,n),(1,n-1),...,(n,0),(n+1,0)\}
\]
(the positions of the $\beta_i$ and $\del\beta_i$), $C^{p,q}:=A^{p,q}$. In every bidegree $(p,q)\in S$, we set $C^{p,q}\subseteq A^{p,q}$ to be the subspace of all left-invariant forms generated by all elementary wedges of the basis elements $dx_i,dy_i,dz_i,\omega_i,\eta_i, \theta_i$ and their conjugates, except for the $\beta_i$ or $\del\beta_i$ which lives in this bidegree. By construction $A=B\oplus C$ as bigraded vector spaces, and it remains to show that this $C$ is really a subcomplex, i.e. that it is stable under the differential. 
From \cite{BR14} we already know that differentials of forms containing no summands with $\eta_i,\theta_i,\bar\eta_i,\bar\theta_i$-factors land again in $C$. On the other hand, whenever $\sigma$ is an elementary wedge of forms containing such a factor $\tau\in\{\eta_i,\theta_i,\bar\eta_i,\bar\theta_i\}$, writing its differential $d\sigma$ as a sum of elementary wedges we see that each of them is either a multiple of $\tau$ again, or a multiple of $dx_id\bar{x}_i$ or of $dy_{i+1}d\bar{y}_{i+1}$ or of $dz_id\bar{z}_{i+1}$ and therefore does not lie in $B$ but in $C$. Thus, the $C$ is a subcomplex and the sum $A=B\oplus C$ is one of double-complexes. As before, this implies that the Fr\"olicher spectral sequence of $B$ is a direct summand in that of $A$, hence $d_n([\beta_1])=[\beta_n]\neq 0$ on $\tilde{X}_n$.

\begin{rem}
The method we used to produce an SKT metric works in greater generality. We have not tried to find the most general statement, but the following ad-hoc construction already yields many more examples: Consider any nilpotent Lie group $G$ with left-invariant almost complex structure s.t. we have a decomposition of the space of left-invariant $(1,0)$-forms $A^1=V\oplus W$, such that $d(W)\subseteq (V+\bar V)\wedge (V+\bar V)$ and $dV=0$ ($G$ is thus $2$-step nilpotent). Assume we are given a left-invariant metric $h$ such that $\del\delbar\omega_h=\sum_{(i,j)\in A}v_{i\bar{i}j\bar{j}}$ for some basis $(v_i)_{i\in I}$ of $V$ and a subset $A\subseteq I^2$. Now construct a new Lie group $G'$, also carrying a left-invariant almost complex structure, by prescribing its space of left-invariant $(1,0)$-forms as $A^{1,0}_{G'}:=V\oplus W\oplus W'$ where $W':=\langle \omega^{ij}\rangle_{(i,j)\in A}$ has a basis element for each element in $A$ and $d\omega^{ij}:=v_{i\bar{i}}+v_{j\bar{j}}$. Then $h'$ defined by $\omega_{h'}:=\omega_h+\sum_{(i,j)\in A} \omega^{ij}\overline{\omega^{ij}}$ is a pluriclosed metric on $G'$. Note that by construction $G'$ is an extension of $G$ by an abelian subgroup. If we started with a nilmanifold $X=G/\Gamma$, we may in this way obtain a torus bundle over $X$ which carries a pluriclosed metric. 
\end{rem}

\section{Tables}

\begin{center}
\begin{table}[h]
 \centering
\begin{tabular}{>{$\mathbf\bgroup}l<{\mathbf\egroup$} || >{$}l<{$}||>{$}l<{$}}
\toprule
(p,q) & (C^{p,q}_{b})^{\sigma=\id}\text{ cases \ref{item:nakamura-1}, \ref{item:nakamura-2}}&(C^{p,q}_{b})^{\sigma=\id}\text{ case \ref{item:nakamura-3}}\\
\toprule
(0,0) & \C \left\langle 1 \right\rangle &\C \left\langle 1 \right\rangle\\
\midrule[0.02em]
(1,0) & \{0\} & \{0\}\\[5pt]
(0,1) & \{0\} & \{0\}\\
\midrule[0.02em]
(2,0) & \C \left\langle \de z_{23} \right\rangle & \C \left\langle \de z_{23} \right\rangle\\[5pt]
(1,1) & \C \left\langle \de z_{1\bar1},\; \esp^{-2z_{1}}\de z_{2\bar2}+ \esp^{2z_{1}}\de z_{3\bar3},\; \esp^{-2\bar z_{1}} \de z_{2\bar2}+ \esp^{2\bar z_{1}}\de z_{3\bar3} \right\rangle &\C \left\langle \de z_{1\bar1}\right\rangle \\[5pt]
(0,2) & \C \left\langle \de z_{\bar 2\bar3} \right\rangle & \C \left\langle \de z_{\bar 2\bar3} \right\rangle \\
\midrule[0.02em]
(3,0) & \{0\} & \{0\}\\[5pt]
(2,1) & \C \left\langle  \de z_{13\bar2}+\de z_{12\bar3},\; \esp^{-2 z_{1}}\de z_{12\bar2}+\esp^{2z_{1}}\de z_{13\bar3},\;  \esp^{-2\bar z_{1}}\de z_{12\bar2}+ \esp^{2\bar z_{1}}\de z_{13\bar3} \right\rangle &\C \left\langle \de z_{12\bar3}+\de z_{13\bar2} \right\rangle \\[5pt]
(1,2) & \C \left\langle \de z_{3\bar1\bar2}+ \de z_{2\bar1\bar3},\; \esp^{-2\bar z_{1}}\de z_{2\bar1\bar2}+\esp^{2\bar z_{1}}\de z_{3\bar1\bar3},\;   \esp^{-2 z_{1}}\de z_{2\bar1\bar2}+ \esp^{2 z_{1}}\de z_{3\bar1\bar3}\right\rangle & \C \left\langle \de z_{3\bar1\bar2}+ \de z_{2\bar1\bar3}\right\rangle\\[5pt]
(0,3) & \{0\} & \{0\} \\
\midrule[0.02em]
(3,1) & \C \left\langle \de z_{123\bar1} \right\rangle & \C \left\langle \de z_{123\bar1} \right\rangle \\[5pt]
(2,2) & \C \left\langle\de z_{23\bar2\bar3}, \; \esp^{-2z_{1}}\de z_{12\bar1\bar2},+ \esp^{2z_{1}}\de z_{13\bar1\bar3},\;  \esp^{-2\bar z_{1}}\de z_{12\bar1\bar2}+ \esp^{2\bar z_{1}}\de z_{13\bar1\bar3} \right\rangle  & \C \left\langle\de z_{23\bar2\bar3}\right\rangle\\[5pt]
(1,3) & \C \left\langle \de z_{1\bar1\bar2\bar3} \right\rangle & \C \left\langle \de z_{1\bar1\bar2\bar3} \right\rangle \\
\midrule[0.02em]
(3,2) & \{0\} & \{0\} \\[5pt]
(2,3) & \{0\} & \{0\} \\
\midrule[0.02em]
(3,3) & \C \left\langle \de z_{123\bar1\bar2\bar3} \right\rangle & \C \left\langle \de z_{123\bar1\bar2\bar3} \right\rangle \\
\bottomrule
\end{tabular}
\caption{The double complex $(C^{*,*} _{b})^{\sigma=\id}$ for the completely-solvable Nakamura manifold in cases \ref{item:nakamura-1}, \ref{item:nakamura-2} and \ref{item:nakamura-3}.}
\label{table:c-nakamura-1}
\end{table}
\end{center}

\begin{center}
\begin{table}[h]
 \centering
\begin{tabular}{>{$\mathbf\bgroup}l<{\mathbf\egroup$} || >{$}l<{$}}
\toprule
(p,q) & (A^{p,q})^{\sigma=\id} \\
\toprule
(0,0) & \C \left\langle 1 \right\rangle \\
\midrule[0.02em]
(1,0) & \{0\}  \\[5pt]
(0,1) & \{0\}\\
\midrule[0.02em]
(2,0) & \C \left\langle \omega^{12},\omega^{34} \right\rangle \\[5pt]
(1,1) & \C \left\langle \omega^{1\bar1},\omega^{2\bar2},\omega^{3\bar 3},\omega^{3\bar 4},\omega^{4\bar 3},\omega^{4\bar 4}\right\rangle \\[5pt]
(0,2) & \C \left\langle \omega^{\bar1\bar2},\omega^{\bar3\bar4}\right\rangle\\
\midrule[0.02em]
(3,0) & \{0\}\\[5pt]
(2,1) & \C \left\langle \omega^{13\bar{2}}, \omega^{14\bar{2}}, \omega^{23\bar 1}, \omega^{24\bar 1}\right\rangle\\[5pt]
(1,2) & \C \left\langle \omega^{2\bar1\bar3}, \omega^{2\bar1\bar 4}, \omega^{1\bar2\bar3}, \omega^{1\bar2\bar4}\right\rangle\\[5pt]
(0,3) & \{0\} \\
\midrule[0.02em]
(4,0) & \C \left\langle \omega^{1234}\right\rangle\\[5pt]
(3,1) & \C \left\langle \omega^{123\bar 3}, \omega^{123\bar 4},  \omega^{124\bar 3}, \omega^{124\bar 4}, \omega^{134\bar 1}, \omega^{234\bar 2}\right\rangle\\[5pt]
(2,2) & \C \left\langle \omega^{12\bar1\bar 2}, \omega^{12\bar3\bar 4}, \omega^{13\bar1\bar 3},\omega^{13\bar 1\bar 4}, \omega^{14\bar1\bar3}, \omega^{14\bar1\bar4}, \omega^{23\bar 2\bar 3}, \omega^{23\bar 2\bar 4}, \omega^{24\bar2\bar 3},\omega^{24\bar2\bar 4}, \omega^{34\bar1\bar2}, \omega^{34\bar3\bar4}\right\rangle\\[5pt]
(1,3) & \C \left\langle \omega^{3\bar1\bar2\bar 3}, \omega^{4\bar1\bar2\bar 3}, \omega^{3\bar1\bar2\bar4}, \omega^{4\bar1\bar2\bar4}, \omega^{1\bar 1\bar 3\bar 4},\omega^{2\bar2\bar3\bar 4}\right\rangle \\[5pt]
(0,4) & \C \left\langle \omega^{\bar1\bar2\bar3\bar4}\right\rangle\\
\bottomrule
\end{tabular}
\caption{The double complex $A^{\sigma=\id}$ for Construction \ref{constr: ddc+3}, up to middle degree.}
\label{table: ddc+3}
\end{table}
\end{center}

\newpage

\end{document}